\def\br#1\er{\textcolor{red}{#1}} %
\newcommand{\be}{\begin{equation}}
\newcommand{\ee}{\end{equation}}
\newcommand{\ben}{\begin{enumerate}}
\newcommand{\een}{\end{enumerate}}
\newcommand{\bit}{\begin{itemize}}
\newcommand{\eit}{\end{itemize}}
\newcommand{\edoc}{\end{document}}
\newcommand{\cambios}[1]{{#1}}
\newcommand{\qcd}{\begin{flushright} $\Box$ \end{flushright}}
\begin{document}
\newtheorem{thm}{Theorem}[section]
\newtheorem{prop}[thm]{Proposition}
\newtheorem{lemma}[thm]{Lemma}
\newtheorem{cor}[thm]{Corollary}
\newtheorem{conv}[thm]{Convention}
\theoremstyle{definition}
\newtheorem{defi}[thm]{Definition}
\newtheorem{notation}[thm]{Notation}
\newtheorem{exe}[thm]{Example}
\newtheorem{conj}[thm]{Conjecture}
\newtheorem{prob}[thm]{Problem}
\newtheorem{rem}[thm]{Remark}
%\theoremstylx

\usetikzlibrary{matrix}
\date{}

\title{Some remarks on conformal symmetries and Bartnik's splitting conjecture}

\author[1]{I.P. Costa e Silva}
%\email{pontual.ivan@ufsc.br}
%\address{Department of Mathematics,\\
%University of Miami, Coral Gables, FL 33124, USA.}
\author[2]{J.L. Flores}
%\email{floresj@uma.es}
%\address{Departamento de \'Algebra, Geometr\'{i}a y Topolog\'{i}a,\\ Facultad de Ciencias, Universidad de M\'alaga \\ Campus Teatinos, 29071 M\'alaga, Spain}
\author[3]{J. Herrera}
%\address{Department of Mathematics,
%Universidade Federal de Santa Catarina, 88.040-900 Florian\'{o}polis-SC, Brazil.}
%\address{Universidade Federal de Santa Catarina, Brazil}
%\address{Universidad de M\'alaga, Spain}
%\date{March 16th, 2016}
%\newcommand{\qed}{\nobreak \ifvmode \relax \else
%	\ifdim\lastskip<1.5em \hskip-\lastskip
%	\hskip1.5em plus0em minus0.5em \fi \nobreak
%	\vrule height0.75em width0.5em depth0.25em\fi}
%\newcommand{\qcd}{\begin{flushright} $\Box$ \end{flushright}}
%\usepackage[cp850]{inputenc}
%\usepackage{amsfonts,a4a4}
\affil[1]{\small{{\it Department of Mathematics\\
Universidade Federal de Santa Catarina, 88.040-900 Florian\'{o}polis-SC, Brazil.}}}
\affil[2]{\small{{\it Departamento de \'Algebra, Geometr\'{i}a y Topolog\'{i}a\\ Facultad de Ciencias, Universidad de M\'alaga\\ Campus Teatinos, 29071 M\'alaga, Spain.}}}
\affil[3]{\small{{\it Departamento de Matemáticas, Edificio Albert Einstein, Universidad de Córdoba\\ Campus de Rabanales, 14071 Córdoba, Spain.}}}

\maketitle

\begin{abstract}
Inspired by the results in a recent paper by G. Galloway and C. Vega \cite{GV2}, we investigate a number of geometric consequences of the existence of a timelike conformal Killing vector field on a globally hyperbolic spacetime with compact Cauchy hypersurfaces, especially in connection with the so-called Bartnik's splitting conjecture. In particular we give a complementary result to the main theorem in \cite{GV2}.
\end{abstract}

%  \flushbottom

%\tableofcontents

\section{Introduction: motivations \& statement of main results}\label{S0}
   In 1988, R. Bartnik \cite{B} posed the following conjecture:
   \begin{conj}\label{conj: bartnik} {\em Let $(M^{n+1},g)$ be a globally hyperbolic spacetime with compact Cauchy hypersurfaces, satisfying the {\em timelike convergence condition} (TCC) $Ric(v,v) \geq 0$ for every $v \in TM$ timelike. Then, either $(M,g)$ is timelike geodesically incomplete or else it is globally isometric to a product spacetime $(\mathbb{R}\times S, -dt^2 \oplus h)$, where $(S,h)$ is a compact Riemannian manifold.}
   \end{conj}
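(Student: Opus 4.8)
The natural strategy is to deduce the splitting from the \emph{Lorentzian splitting theorem} of Eschenburg, Galloway and Newman, which asserts that a spacetime obeying the TCC that is timelike geodesically complete (or merely globally hyperbolic) and that contains a \emph{timelike line} is isometric to a product $(\R \times S, -dt^2 \oplus h)$. The argument then proceeds by contraposition: I would assume $(M,g)$ is \emph{not} timelike geodesically incomplete---that is, timelike geodesically complete---and try to produce a timelike line; the splitting theorem then yields the product, with $S$ automatically compact since it is a Cauchy hypersurface of $M$. In this way the whole problem is concentrated in a single step, namely to \textbf{construct a timelike line}.

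To build such a line I would exploit the compact Cauchy hypersurfaces together with completeness. Fix a future-directed timelike unit geodesic and a sequence of Cauchy hypersurfaces $\Sigma_{-k}, \Sigma_{k}$ receding into the past and future. By global hyperbolicity together with compactness, for each $k$ there is a maximal (distance-realizing) timelike geodesic segment $\gamma_k$ joining $\Sigma_{-k}$ to $\Sigma_{k}$; after reparametrizing so that $\gamma_k(0)$ lies on a fixed intermediate Cauchy surface $\Sigma_0$, which is compact, I would extract via a limit curve argument a causal geodesic $\gamma_\infty$ through $\Sigma_0$. Upper semicontinuity of the Lorentzian distance in globally hyperbolic spacetimes ensures that $\gamma_\infty$ is itself maximizing between any two of its points, so that $\gamma_\infty$ is a line \emph{provided} it is timelike and inextendible in both directions.

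The hard part---indeed the reason the conjecture remains open in full generality---is to guarantee that this limiting geodesic is a genuine \emph{timelike} line rather than a degenerate object: a priori the segments $\gamma_k$ could tip over toward the null cone, so that $\gamma_\infty$ is lightlike, or the maximal segments might fail to lengthen without bound, so that $\gamma_\infty$ is incomplete. Excluding these possibilities requires quantitative control---uniform lower bounds on the timelike diameters between receding Cauchy surfaces and on the ``opening'' of the maximal segments away from the null cone---which completeness and the TCC alone do not obviously supply. A complementary route, closer to Bartnik's original viewpoint, would bypass lines altogether: one would first establish the existence of a compact constant mean curvature (ideally maximal, $H \equiv 0$) Cauchy hypersurface, and then use the TCC together with the second variation formula and the Raychaudhuri equation to force the mean curvature of the associated CMC foliation to vanish identically and each leaf to be totally geodesic, whence the product structure. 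Either way, the analytic input needed to close the gap---existence of the line, or existence and rigidity of the CMC foliation---is where essentially all the difficulty lies, and I would expect it to be the principal obstacle.
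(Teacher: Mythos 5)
You have not proved the statement, but no proof should be expected: this is Conjecture \ref{conj: bartnik}, Bartnik's splitting conjecture, which has been open since 1988, and the paper itself offers no proof of it --- the paper's theorems (Theorem \ref{mainthm1}, Corollary \ref{maincor1}, Theorem \ref{mainthm4}) are partial results that establish the splitting only under \emph{additional} hypotheses, chiefly the existence of a timelike conformal Killing vector field. So there is nothing in the paper to compare your argument against line by line; what can be assessed is whether your sketch correctly identifies the state of the problem, and it does. Your reduction is exactly the standard one discussed in the paper's introduction: assume timelike geodesic completeness, invoke the fact that a globally hyperbolic spacetime with compact Cauchy hypersurfaces contains a causal geodesic line (Ch.\ 8 of \cite{BE}), and, if that line is timelike, conclude via the Lorentzian splitting theorem \cite{E,EG}. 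You also put your finger on precisely the right gap: the limit-curve construction may produce a \emph{null} line, and neither the TCC nor completeness excludes this degeneration. (One small correction: on a globally hyperbolic spacetime the Lorentzian distance is continuous --- in general it is \emph{lower}, not upper, semicontinuous --- which is what makes the limit of maximizing segments maximizing; and since a line through a fixed compact Cauchy surface is automatically inextendible, the real dichotomy is only timelike versus null, not incompleteness of the limit.)

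It is worth seeing how the paper's results slot into exactly the hole you identified. Galloway's no-observer-horizon condition (\ref{NOH}) rules out null lines: a null line $\gamma$ would give $I^-(\gamma) \neq M$ via the TIP argument sketched in Section \ref{S0}, contradicting NOH; hence under NOH the causal line must be timelike and the splitting theorem applies \cite{G1}. Theorem \ref{mainthm1} shows that a \emph{complete} timelike conformal Killing field forces NOH, yielding the equivalences of Corollary \ref{maincor1}, and Theorem \ref{mainthm4} trades completeness of the field for asymptotic bounds on $\sigma$ and the scalar curvature, using the conformal extension of Theorem \ref{mainthm2} to show (Proposition \ref{line1}) that a null line would force every integral curve of $X$ to be incomplete and $\overline{M}$ to be compact in $\hat{M}$, which is then contradicted via the curvature estimates. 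Your alternative CMC route is indeed Bartnik's original viewpoint \cite{B}, but note that the existence of a compact CMC (or maximal) Cauchy hypersurface in this generality is itself a well-known open problem, so that route merely relocates the difficulty rather than removing it. In short: your assessment that ``essentially all the difficulty'' lies in upgrading the causal line to a timelike one is accurate, and it is exactly the point at which the paper's extra symmetry hypotheses intervene.
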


 Bartnik calls a globally hyperbolic spacetime (of any dimension) possessing a compact Cauchy hypersurface and satisfying the TCC a {\em cosmological spacetime}, because the ``spatially closed'' Robertson-Walker models in relativistic cosmology (with suitable matter content and cosmological constant) are obviously important examples of such spacetimes. The Bartnik's conjecture then becomes the statement that {\em a cosmological spacetime is either timelike geodesically incomplete or else splits isometrically as a product of a Lorentz line and a compact Riemannian manifold}. (For simplicity, we shall simply say then that the pertinent spacetime {\em splits}; in this paper this phrase will always refer to the specific kind of isometric splitting appearing in the Bartnik conjecture.)

The importance of Bartnik's conjecture both for geometry and physics lies in that it establishes a rigidity statement for the celebrated 1970 singularity theorem of Hawking and Penrose \cite{HP,yau}. The latter theorem implies, in particular, that cosmological spacetimes which in addition satisfy the {\em generic condition} have an incomplete timelike or null geodesic. Of course, it is well-known (see, e.g., Chs. 2 and 14 of \cite{BE}) that the main technical effect of the generic condition is to induce pairs of conjugate points along complete nonspacelike geodesics; hence it is natural to ask whether nonspacelike geodesic incompleteness still holds when one drops the generic condition. In this context, Bartnik's conjecture (if true) implies that timelike geodesic completeness indeed holds only in exceptional, non-generic cases in the class of cosmological spacetimes.

Bartnik's conjecture has been investigated by a number of researchers, especially G. Galloway and collaborators \cite{B,EG,G1,G2,GV1,GV2}, who have proven alternative versions under a variety of additional assumptions. Two of the latter will especially concern us here. The first one is an early result by Galloway \cite{G1}: if a cosmological spacetime $(M,g)$ is timelike geodesically complete and satisfies the {\em no-observer-horizon (NOH) condition}
\begin{equation}
\label{NOH}
M= I^{\pm}(\gamma) \mbox{ for any inextedible timelike curve $\gamma$},
\end{equation}
then $(M,g)$ splits as in the Bartnik conjecture.

The NOH condition is relevant for the Bartnik's conjecture lies in that it implies that $(M,g)$ has {\em no null geodesic lines}. To see this, let such a null geodesic line $\gamma$ be given. In this case, we would have $I^-(\gamma) \cap Im \gamma = \emptyset$ from the achronality of $\gamma$. But $I^-(\gamma)$ is an {\em indecomposable terminal past set} (TIP) \cite{GKP}, and as such there would exist an inextendible {\em timelike} curve $\beta$ \cite{GKP} such that $I^-(\beta) = I^-(\gamma) \neq M$, violating the NOH condition.

Now, any globally hyperbolic spacetime $(M,g)$ with compact Cauchy hypersurfaces admits either a null or a timelike geodesic line by standard arguments (see, e.g., Ch. 8 in \cite{BE}). Since the first situation is excluded if the NOH condition holds, we would then have a timelike line in $(M,g)$. But a timelike geodesically complete cosmological spacetime with a timelike line necessarily splits by the Lorentzian Splitting Theorem \cite{E,EG,BE}.

The second germane partial result is much more recent \cite{GV2}, and assumes the presence of a {\em timelike conformal Killing vector field} in a {\em vacuum} cosmological spacetime:

\begin{thm}
\label{GVThm2}
[Galloway \& Vega, '17] Let $n\geq 2$ and suppose $(M^{n+1},g)$ is a Ricci-flat, timelike geodesically complete cosmological spacetime possessing a timelike conformal Killing vector field $X \in \mathfrak{X}(M)$. Then $X$ is in fact a Killing vector field and $(M,g)$ splits.
\end{thm}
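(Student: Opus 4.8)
The plan is to read off, from the conformal Killing equation together with the vacuum condition, a pointwise differential identity for the conformal factor that is rigid enough to force it to be constant, then to exploit completeness and compactness to show this constant vanishes; once $X$ is known to be Killing, a line-existence argument feeds the Lorentzian Splitting Theorem. Write $\mathcal{L}_X g = 2\phi g$, so that $\phi = \frac{1}{n+1}\operatorname{div} X$. Differentiating the conformal Killing equation twice and contracting --- the classical Yano identity --- gives, on a manifold of dimension $m=n+1$,
\[
\mathcal{L}_X R_{ab} = -(n-1)\nabla_a\nabla_b\phi - (\Box\phi)\,g_{ab}.
\]
Since $(M,g)$ is Ricci-flat the left-hand side vanishes identically. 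Tracing yields $2n\,\Box\phi = 0$, hence $\Box\phi=0$, and feeding this back gives $(n-1)\nabla_a\nabla_b\phi=0$. Here the hypothesis $n\ge 2$ enters decisively: it lets us cancel $(n-1)$ and conclude that the full Hessian of $\phi$ vanishes, i.e. $\nabla\phi$ is a parallel vector field and $\phi$ is affine.

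Next I would pin down the causal character of $\nabla\phi$. Being parallel, $g(\nabla\phi,\nabla\phi)$ is constant, so $\nabla\phi$ has fixed causal type. Restricting $\phi$ to a compact Cauchy hypersurface $S$ and evaluating at a maximum of $\phi|_S$, the tangential gradient vanishes, so there $\nabla\phi$ is normal to $S$, hence timelike or zero; this rules out the spacelike case. If $\nabla\phi$ is timelike and nonzero, then it is a parallel, closed, timelike field, so $(M,g)$ is already a metric product $(\R\times S,-dt^2\oplus h)$ with $\nabla\phi$ along the line factor and $\phi=\phi(t)$ constant on each slice; but then restricting $\mathcal{L}_X g = 2\phi g$ to the fibers $\{t\}\times S$ exhibits the spatial part of $X$ as a conformal field of the compact Riemannian manifold $(S,h)$ with constant factor $\phi(t)$, nonzero for some $t$. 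This is impossible, since a compact Riemannian manifold admits no proper homothety (integrate the divergence). Therefore $\nabla\phi\equiv 0$ and $\phi$ is constant.

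It then remains to exclude a nonzero constant. If $\phi\equiv c\ne 0$ then $X$ is a proper homothety, $\psi_s^*g = e^{2cs}g$ along its flow, which is complete by spatial compactness, so the induced volumes of the Cauchy slices collapse as $s\to\mp\infty$. Feeding this into the Raychaudhuri equation for a normal timelike congruence, and using $Ric=0$, one obtains negative expansion and hence a conjugate point in finite proper time, contradicting timelike geodesic completeness. Thus $c=0$ and $X$ is a timelike Killing field, which is the first assertion.

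Finally, with $X$ a complete timelike Killing field on a timelike geodesically complete cosmological spacetime, I would produce a timelike line and invoke the Lorentzian Splitting Theorem \cite{E,EG,BE} to obtain the product splitting. Being globally hyperbolic with compact Cauchy hypersurfaces, the spacetime contains a null or a timelike line by the standard limit-curve argument; the null alternative should be excluded by combining Galloway's null splitting theorem with the presence of the everywhere-timelike Killing field, together with the $Ric\ge 0$ focusing of null lines. I expect this last point --- manufacturing the timelike line without assuming the NOH condition \eqref{NOH} --- to be the main obstacle, the Yano identity of the first step being the principal computational input and the compactness arguments of the middle steps being comparatively soft.
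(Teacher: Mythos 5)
Your first half tracks the intended proof faithfully. Note first that the paper itself does not reprove this theorem: it is quoted from \cite{GV2}, and the Introduction only records the mechanism, namely that Ricci-flatness enters exactly to show $X$ is Killing (via ${\cal L}_X Ric \equiv 0$), that completeness of $X$ then follows from Garfinkle--Harris \cite{GH}, and that the splitting then follows as in Corollary \ref{maincor1}. Your Yano-identity computation, the trace giving $\Box\phi=0$ and hence $\mathrm{Hess}\,\phi=0$ for $n\geq 2$, the maximum-on-a-compact-Cauchy-hypersurface argument forcing $\nabla\phi$ to be timelike or zero, and the slice-wise divergence integration excluding the nonconstant case are all correct and match the route of \cite{GV2} (modulo the routine point that a nontrivial parallel timelike gradient, together with timelike completeness, yields the global metric product --- e.g.\ its integral curves are complete timelike lines, so the Lorentzian Splitting Theorem applies, since $Ric=0$ gives the TCC).

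There are, however, two genuine gaps. First, in excluding the constant case $\phi\equiv c\neq 0$, the assertion that the homothetic flow ``is complete by spatial compactness'' is unjustified: this very paper's de Sitter example shows that a timelike conformal field on a spatially compact globally hyperbolic spacetime can be incomplete, and the Garfinkle--Harris completeness lemma applies to \emph{Killing} fields and itself uses timelike geodesic completeness, so a proper homothety is covered by neither. Moreover, ``feeding volume collapse into Raychaudhuri'' is not yet an argument: to run Hawking-type focusing against timelike completeness you need an actual Cauchy hypersurface whose expansion is bounded away from zero with a definite sign, and pushing a slice by the homothety only rescales its mean curvature by a positive factor without changing its sign (if the slice is maximal, the scaling produces nothing), so no such hypersurface has been exhibited. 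Second, the endgame you flag as ``the main obstacle'' --- producing the timelike line --- is precisely what the quoted route settles, and without any appeal to the null splitting theorem: once $X$ is Killing, \cite{GH} gives its completeness (timelike completeness of $(M,g)$ is what licenses this), Theorem \ref{mainthm1} then yields the NOH condition (\ref{NOH}), which by the TIP argument in the Introduction excludes null lines; the standard line-existence result for compact Cauchy hypersurfaces then furnishes a timelike line, and the Lorentzian Splitting Theorem \cite{E,EG,BE} (equivalently, Galloway's theorem \cite{G1}) concludes --- this is exactly Corollary \ref{maincor1}, $(ii)\Rightarrow(i)$. As written, your proposal therefore establishes neither $\phi\equiv 0$ (the homothety case remains open) nor the final splitting.
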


Our goal in this paper is to clarify a little further the effects of the presence of a timelike conformal Killing vector field on the global geometry of spacetimes with compact Cauchy hypersurfaces. In view of Theorem \ref{GVThm2} and our own results below, this investigation is directly relevant to Bartnik's conjecture, and this is our main motivation; but a distinctive feature of our work here is that we also pursue results {\em independently of any assumptions on the Ricci tensor}.

%Our pursuit of a broader study stem partly from physical considerations. Indeed, the {\em direct} physical relevance of the original context of Bartnik's conjecture for cosmology might be called into question, since solid empirical evidence \cite{perlmutter,riess} now seems to indicate an accelerated expansion in our universe, usually attributed to some form of ``dark energy'' whose density at the present epoch largely dominates that of overall matter, including dark matter (see, e.g., \cite{DE,DM} for reviews and the references therein). This accelerated expansion, in turn, suggests that the TCC does not hold in cosmological scales\footnote{One might also argue that present empirical data also favor an ``open'', (i.e. non-compact) spatially flat universe \cite{flat}, and hence that spatial compactness would also be a dated assumption. However, to the best of our knowledge, cosmological data of the large-scale {\em topology} of the universe are nowhere as conclusive as those on the accelerated expansion. In any case, spatial compactness remains an important technical assumption in this paper.}. Therefore, a more general analysis becomes cogent.

On the one hand, all Friedmann-Lema\^itre-Robertson-Walker geometries which are so highly useful in cosmology admit a timelike conformal Killing vector field (usually not complete). This fact alone justifies pursuing a deeper understanding of the effects of the presence of that form of conformal symmetry in the geometric setting of Bartnik's conjecture, not necessarily satisfying the TCC. On the other hand, we need not make any special assumptions about spatial isotropy/homogeneity in this paper.

Concretely, our first main result is to show that the main role of a {\em complete} timelike conformal Killing vector field is precisely to ensure that the NOH condition (and hence the absence of null geodesic lines) is satisfied:

\begin{thm}
\label{mainthm1}
Suppose $(M,g)$ is a globally hyperbolic spacetime with compact Cauchy hypersurfaces, and admits a complete timelike conformal Killing vector field $X \in \mathfrak{X}(M)$. Then the no-observer-horizon condition (\ref{NOH}) is satisfied in $(M,g)$.
\end{thm}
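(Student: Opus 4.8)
The plan is to exploit the flow $\{\psi_t\}_{t\in\R}$ of $X$, which is globally defined by completeness and consists of conformal diffeomorphisms (since $\mathcal{L}_X g = 2\phi\, g$ forces $\psi_t^* g = \Omega_t^2\, g$ for positive functions $\Omega_t$). Conformal rescalings preserve the light cones, so each $\psi_t$ preserves the chronological and causal relations and, since it carries inextendible causal curves to inextendible causal curves, maps Cauchy hypersurfaces to Cauchy hypersurfaces. After fixing the time orientation so that $X$ is future-directed, the integral curve $\sigma_p(t) = \psi_t(p)$ is an inextendible timelike curve with $\sigma_p(t) \in I^+(p)$ for $t>0$. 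The decisive first step will be to prove the assertion for these integral curves, namely $I^+(\sigma_p) = I^-(\sigma_p) = M$ for every $p \in M$.

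To establish this, I would observe that the image of $\sigma_p$ is \emph{flow-invariant}, $\psi_t(\mathrm{Im}\,\sigma_p) = \mathrm{Im}\,\sigma_p$, whence $P := I^-(\sigma_p)$ is an open past set invariant under the whole flow. If $P \neq M$, then by connectedness of $M$ its topological boundary $\partial P$ is nonempty; being the boundary of a past set it is achronal, and it is flow-invariant because each $\psi_t$ is a homeomorphism. Picking any $r \in \partial P$ and any $t>0$ then yields $\psi_t(r) \in \partial P$ together with $r \ll \psi_t(r)$, contradicting achronality of $\partial P$. Hence $P = M$, and the dual argument with $t<0$ gives $I^+(\sigma_p) = M$. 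This flow-invariance trick is the core of the proof, but it applies only to the integral curves themselves.

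The second and more delicate step will be to transfer the conclusion to an arbitrary inextendible timelike curve $\gamma$; this is where the compactness of the Cauchy hypersurfaces becomes essential, and I expect it to be the main obstacle, precisely because $I^{\pm}(\gamma)$ is in general not flow-invariant and so the preceding argument does not apply directly. Fix $q\in M$ and a Cauchy hypersurface $S_0$. Since $I^+(\sigma_q)=M$ and the open sets $I^+(\sigma_q(-a))$ increase to $M$ as $a\to+\infty$ (the point $\sigma_q(-a)$ receding into the past), compactness of $S_0$ furnishes a single $a_0>0$ with $S_0\subseteq I^+(\sigma_q(-a_0))$; applying the causality-preserving diffeomorphism $\psi_{a_0}$ gives $\psi_{a_0}(S_0)\subseteq I^+(q)$, where $\psi_{a_0}(S_0)$ is again a Cauchy hypersurface. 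As $\gamma$ is inextendible and timelike it must cross this hypersurface, so some $\gamma(s)\in I^+(q)$, that is $q\in I^-(\gamma)$; since $q$ was arbitrary, $I^-(\gamma)=M$. The time-reversed argument (exchanging futures and pasts and letting $a\to+\infty$ on the other side) yields $I^+(\gamma)=M$, establishing the NOH condition (\ref{NOH}).
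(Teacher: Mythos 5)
Your proof is correct, but it takes a genuinely different route from the paper's. The paper invokes Proposition \ref{firstt} (the structure result of \cite{JS,CFS}) to replace $(M,g)$, up to an irrelevant conformal factor, by a standard conformastationary model $\mathbb{R}\times M_0$ with $M_0$ a compact Cauchy hypersurface, and then verifies the NOH condition by an explicit estimate: given an arbitrary point and an inextendible timelike curve $\alpha$, it joins them by a curve $(\tau(s),y(s))$ with $\tau$ affine and $y$ a minimizing $g_0$-geodesic, and checks, using the $t$-independence of $\omega$ and $g_0$ together with compactness of $M_0$, that this curve is timelike once $\alpha$ has advanced far enough in the $t$-coordinate. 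You avoid the splitting altogether: your Step 1 (flow-invariance of $P=I^-(\sigma_p)$ plus achronality of the boundary of a past set) is in effect a self-contained proof of the omniscience property $M=I^{\pm}(\sigma_p)$ along complete timelike conformal Killing orbits, which the paper does not prove but cites as Corollary 3.2 of \cite{HL} in the proof of Proposition \ref{jsmain1}; your Step 2 then transfers the conclusion to an arbitrary inextendible timelike curve $\gamma$ by exhausting the compact Cauchy hypersurface $S_0$ with the nested open sets $I^+(\sigma_q(-a))$, pushing it by the conformal diffeomorphism $\psi_{a_0}$ into $I^+(q)$, and using that $\gamma$ must meet the Cauchy hypersurface $\psi_{a_0}(S_0)$. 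What each approach buys: the paper's argument is very short \emph{given} the (nontrivial) structure theorem, with all the work localized in one quantitative inequality; yours is more intrinsic, needs neither Proposition \ref{firstt} nor the cited result of \cite{HL}, and isolates cleanly where compactness of the Cauchy hypersurfaces enters (only in Step 2, exactly as you predicted). Two small points you elide, both harmless: the flow maps $\psi_t$ preserve time orientation because they are isotopic to the identity, and $P$ is nonempty (it contains $p$), so connectedness of $M$ really does force $\partial P\neq\emptyset$ when $P\neq M$.
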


We emphasize that this result in particular does not require the TCC, and indeed no condition on the Ricci tensor at all. Its proof is actually elementary after one recalls some well-known facts about stationary spacetimes; we include it here as a theorem both because it is important in the development of the ideas in this paper, but also because the result itself seems not to have been stated before explicitly in the literature. Its relevance in the Bartnik's context, if the TCC {\em is} assumed, is given by the following

\begin{cor}
\label{maincor1}
Let $(M,g)$ be a timelike geodesically complete cosmological spacetime. Then the following are equivalent.
\begin{itemize}
\item[i)] $(M,g)$ splits.
\item[ii)] $(M,g)$ admits a timelike Killing vector field.
\item[iii)] $(M,g)$ admits a complete timelike Killing vector field.
\item[iv)] $(M,g)$ admits a complete timelike conformal Killing vector field.
\end{itemize}
\end{cor}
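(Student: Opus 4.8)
The plan is to prove the corollary by establishing the cycle of implications $i)\Rightarrow iii)\Rightarrow iv)\Rightarrow i)$ together with the short loop $iii)\Rightarrow ii)\Rightarrow iii)$, so that all four statements become equivalent. Several of these links are essentially formal. For $i)\Rightarrow iii)$, if $(M,g)$ splits as $(\mathbb{R}\times S,-dt^2\oplus h)$ with $S$ compact, then the coordinate field $\partial_t$ is a timelike Killing field whose integral curves are the complete lines $t\mapsto(t,x)$; hence it is a \emph{complete} timelike Killing field. For $iii)\Rightarrow iv)$ and $iii)\Rightarrow ii)$ one only needs to recall that a Killing field is in particular a conformal Killing field (with vanishing conformal factor) and to forget, respectively retain, the completeness hypothesis; both are immediate.

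The substantive link is $iv)\Rightarrow i)$, and here I would simply feed the complete timelike conformal Killing field $X$ into the machinery already assembled. By Theorem \ref{mainthm1}, the mere existence of such a field on a globally hyperbolic spacetime with compact Cauchy hypersurfaces forces the no-observer-horizon condition (\ref{NOH}). Since $(M,g)$ is by hypothesis a timelike geodesically complete cosmological spacetime, I can then invoke Galloway's splitting result \cite{G1} recalled in the Introduction --- a timelike geodesically complete cosmological spacetime satisfying the NOH condition splits --- to conclude that $(M,g)$ splits. This closes the cycle $i)\Rightarrow iii)\Rightarrow iv)\Rightarrow i)$ and thus renders $i)$, $iii)$ and $iv)$ mutually equivalent; note that the TCC enters the argument only through this step, via Galloway's theorem.

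It remains to incorporate $ii)$, and the only genuinely nontrivial point of the whole argument lives here, in the implication $ii)\Rightarrow iii)$: a timelike Killing field $K$ in this setting ought to be automatically complete, after which $iii)\Rightarrow i)$ finishes. I expect this completeness to be the main obstacle. The strategy I would follow is to take an integral curve $\gamma$ of the (say future-pointing) field $K$, defined on a maximal interval $(a,b)$, and assume $b<\infty$ in order to derive a contradiction. Two standard inputs then apply: the region between two compact Cauchy hypersurfaces is compact, and a maximally defined integral curve with finite endpoint must leave every compact set; combined with the fact that $\gamma$ is future-directed timelike, these force $\tau\circ\gamma\to+\infty$ for a Cauchy temporal function $\tau$, i.e. $\gamma$ would escape to future timelike infinity in finite parameter time. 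The structural feature to play against this is that $K$ is Killing: its flow acts by isometries, so $g(K,K)$ is constant along $\gamma$ and $\gamma$ is, up to affine reparametrization, a timelike curve of finite proper length. I would then argue that the isometric, volume-preserving character of the flow --- equivalently, the invariance of the local geometry all along the orbit --- is incompatible with such finite-time escape in a spacetime foliated by compact Cauchy slices. This is precisely the (known) fact that Killing fields on globally hyperbolic spacetimes with compact Cauchy hypersurfaces are complete; should a self-contained proof be required, it is exactly this compactness-versus-escape tension that has to be made quantitative, and that is where I anticipate the real work.
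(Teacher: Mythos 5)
Your proposal is correct and follows essentially the same route as the paper: the formally trivial implications, Theorem \ref{mainthm1} combined with Galloway's theorem \cite{G1} for the substantive step $(iv)\Rightarrow(i)$, and, for $(ii)\Rightarrow(iii)$, the known completeness of timelike Killing fields in this setting, which the paper disposes of by citing Lemma 1 of Garfinkle--Harris \cite{GH} --- exactly the fact you invoke. One caveat: your sketched self-contained argument for that completeness would not close as written, since finite proper length does not by itself forbid finite-parameter escape (inextendible timelike curves of finite length exist even in geodesically complete globally hyperbolic spacetimes, e.g.\ curves asymptotic to a null geodesic), so the appeal to the known result of \cite{GH} is doing genuine work rather than being a dispensable shortcut.
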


Corollary \ref{maincor1} shows that the existence {\em and completeness} of some (conformal) Killing vector field are inescapable if the Bartnik's conjecture is to hold. Indeed, in the proof of Theorem \ref{GVThm2} as presented in \cite{GV2}, the crucial role of Ricci-flatness lies in showing that $X$ is Killing. (More precisely, one needs that ${\cal L}_XRic \equiv 0$, where ${\cal L}_X$ denotes the Lie derivative with respect to the conformal Killing field $X$.) But once one does ensure that $X$ is Killing, its completeness then follows from a result by Garfinkle and Harris \cite{GH}, which is used both in \cite{GV2} and to prove that $(ii)$ implies $(iii)$ in Corollary \ref{maincor1} (see section \ref{S2} below).

The particular argument given in \cite{GV2}, although very elegant, seems hard to generalize (in a natural way) for more general Ricci tensors. This has in part motivated our investigations here.

Now, without the TCC it is perfectly possible for a spacetime to be geodesically complete, globally hyperbolic with compact Cauchy hypersurfaces and yet to possess an {\em incomplete} conformal Killing vector field. A simple and very important example is given by de Sitter spacetime, which is globally a warped product $(\mathbb{R}\times \mathbb{S}^n, -dt^2 \oplus a ^2  \omega_n)$, where $(\mathbb{S}^n,\omega_n)$ is the unit radius, round $n$-sphere, and the warping function is $a: t \in \mathbb{R} \mapsto \cosh t \in \mathbb{R}$. In this case, $X = a\partial_t$ is an incomplete conformal Killing vector field. Note, however, that $Ric(X,X) <0$, i.e. TCC is violated everywhere. Thus, de Sitter spacetime is {\em not} cosmological in Bartnik's technical sense, and hence it is still consistent both with Bartnik's conjecture and with Corollary \ref{maincor1}.

However, it is well-known (see, e.g. \cite{HE}) that de Sitter spacetime (as well as other FLRW geometries) can be {\em conformally embedded} as an open set in a larger spacetime (namely, Einstein's static universe $(\mathbb{R}\times \mathbb{S}^n, -dt^2 \oplus \omega_n)$) in which $X$ can be extended as a {\em complete} unit timelike {\em Killing} vector field. Our second main result is that this latter construction can be generalized as follows:

\begin{thm} \label{mainthm2} Suppose $(M^{n+1},g)$ is globally hyperbolic, and let $X$ be a timelike conformal Killing vector field in $(M,g)$. Then $(M,g)$ admits an open conformal embedding $\varphi: (M,g) \hookrightarrow (\hat{M}^{n+1}, \hat{g})$ with a {\em complete, unit timelike Killing vector field} $\hat{X} \in \mathfrak{X}(\hat{M})$ extending the pushforwarded $\varphi_{\ast} X$ in $\hat{M}$. Moreover, if $S \subset M$ is a smooth, spacelike (hence acausal) compact Cauchy hypersurface for $(M,g)$, then $(\hat{M}, \hat{g})$ can be chosen so that $\varphi(S)$ is also a smooth, spacelike (hence acausal) compact Cauchy hypersurface for $(\hat{M}, \hat{g})$.
\end{thm}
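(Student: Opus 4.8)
The plan is to first reduce to a \emph{unit} timelike Killing field by a conformal change of $g$, and then to realize $(M,g)$, up to that conformal factor, as an open subset of a complete ``unrolled'' model obtained by integrating the flow of $X$ over a Cauchy hypersurface.

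First I would normalize $X$ conformally. Writing $\mathcal{L}_X g = 2\sigma g$ and $g(X,X) = -\lambda^2$ with $\lambda>0$, I set $\Omega := \lambda^{-1}$ and $\bar g := \Omega^2 g$. Using $X(g(X,X)) = (\mathcal{L}_X g)(X,X) = 2\sigma\,g(X,X)$ one checks that $X(\log\Omega) = -\sigma$, which is precisely the identity forcing $\mathcal{L}_X \bar g = 0$, while $\bar g(X,X) = -1$. Hence $X$ becomes a unit timelike Killing field for $\bar g$, and since the conformal change preserves the causal structure, $(M,\bar g)$ remains globally hyperbolic with the same (spacelike) Cauchy hypersurfaces as $(M,g)$.

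Next I would unroll the flow. Fixing a smooth spacelike Cauchy hypersurface $S$ (which exists for any globally hyperbolic $(M,g)$) and the flow $\phi_t$ of $X$, I define $F(t,p) := \phi_t(p)$ on the maximal open domain $\mathcal D \subset \R\times S$. Because $X$ is nowhere zero, its maximal integral curves are inextendible timelike curves, so each meets the acausal $S$ exactly once; together with transversality of $X$ to $S$, this makes $F:\mathcal D\to M$ a diffeomorphism. Since $X$ is $\bar g$-Killing, $F^*\bar g$ is invariant under the translations $(t,p)\mapsto(t+s,p)$, so it takes the $t$-independent stationary form $\tilde g = -dt^2 + 2\,\omega\,dt + h$ with $\omega$ a $1$-form and $h$ a symmetric $2$-tensor on $S$. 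Because this expression makes sense on all of $\hat M := \R\times S$, I take $\hat g$ to be the same formula there and $\hat X := \partial_t$; then $\hat X$ is automatically complete and unit timelike Killing, $\hat g$ is Lorentzian everywhere (its signature along each $\R\times\{p\}$ is constant and equals that at the point $(0,p)\in\mathcal D$), and $\varphi := F^{-1}$ is an open conformal embedding with $\varphi^*\hat g = \Omega^2 g$ and $\varphi_*X = \hat X|_{\mathcal D}$. This would establish the first assertion.

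For the ``moreover'' I must show $\varphi(S) = \{0\}\times S$ is Cauchy in $(\hat M,\hat g)$; this is the hard part. Completing the square gives $\hat g = -(dt-\omega)^2 + \tilde h$ with $\tilde h := h + \omega\otimes\omega$ Riemannian (indeed $h$ is positive definite, being the induced metric on the spacelike $\varphi(S)$), and a short Sherman--Morrison computation gives $|\omega|^2_{\tilde h} = |\omega|_h^2/(1+|\omega|_h^2) < 1$. Along a future causal curve the causal inequality yields $\dot t \ge (1-|\omega|_{\tilde h})\,\|\dot x\|_{\tilde h}$ with $\dot t>0$, so $t$ is a time function. Compactness of $S$ now enters decisively: it forces $\sup_S|\omega|_{\tilde h} =: c<1$, so after reparametrizing a causal curve by $t$ its $\tilde h$-speed is bounded by $(1-c)^{-1}$; completeness of the compact manifold $(S,\tilde h)$ then prevents $t$ from having a finite supremum or infimum along any inextendible causal curve, since a finite bound would force the spatial projection to converge and the curve to acquire an endpoint. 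Hence every inextendible causal curve meets $\{t=0\}=\varphi(S)$ exactly once, so $\varphi(S)$ is a compact spacelike Cauchy hypersurface. The main obstacle is exactly this last step --- controlling the global causal behavior of the \emph{extended} metric $\hat g$ on $\R\times S$ --- and it is where compactness (hence metric completeness) of the Cauchy hypersurface is indispensable.
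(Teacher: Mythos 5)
Your proposal is correct, and the construction of $(\hat M,\hat g)$ has the same skeleton as the paper's proof: rescale conformally by $1/\beta$ so that $X$ becomes a unit timelike Killing field (your $\bar g$ is exactly the paper's $\tilde g = g/\beta^2$), show the flow map from an open subset of $\mathbb{R}\times S$ is a diffeomorphism onto $M$, put the pulled-back metric in $t$-independent standard stationary form, and then \emph{define} $\hat M = \mathbb{R}\times S$ with the same formula and $\hat X = \partial_t$. There are, however, two genuine differences worth noting. First, you obtain the stationary form directly from the translation invariance ${\cal L}_{\partial_t}F^{\ast}\bar g = 0$ and the spacelikeness of $S$, whereas the paper routes through Proposition \ref{first}(ii): it introduces the leaf space $Q$, the semi-Riemannian submersion $\pi:(M,\tilde g)\to(Q,h)$ and the connection form $\theta$, and then pulls these back along $\phi_S$, decomposing $\omega = dt - \phi_S^{\ast}\theta$ and $g_0 = h_0 - \omega_0\otimes\omega_0$; your shortcut is more elementary and loses nothing, since in both cases the spatial tensor is just the induced metric on the slices. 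Second, and more substantially, your verification that $\varphi(S)$ is Cauchy is a different argument from the paper's: the paper argues softly, quoting that $t$ is a temporal function on any standard stationary spacetime (item (2) of Example \ref{ex1}, from \cite{JS}) --- which gives acausality of the slices and strong causality --- and then shows an inextendible causal curve missing $\{0\}\times S$ would be imprisoned in the compact set $[t(\lambda_0),0]\times S$, contradicting non-imprisonment in strongly causal spacetimes; you instead complete the square, prove the pointwise identity $|\omega|^2_{\tilde h} = |\omega|^2_h/(1+|\omega|^2_h)<1$, use compactness of $S$ to get the uniform bound $c = \sup_S |\omega|_{\tilde h} < 1$ and hence the Lipschitz estimate $\dot t \geq (1-c)\,\|\dot x\|_{\tilde h}$, and conclude from completeness of the compact $(S,\tilde h)$ that $t\to\pm\infty$ along every inextendible causal curve. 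Your quantitative route is self-contained (it even reproves the acausality of the slices rather than citing \cite{JS}), makes explicit where compactness enters, and is in the spirit of the known stationary-spacetime global hyperbolicity criteria of \cite{CFS}; the paper's route is shorter because it can invoke the temporal-function fact and the standard non-imprisonment lemma off the shelf. Both are complete proofs.
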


Again, no assumptions on the Ricci tensor are made, and compactness of the Cauchy hypersurfaces is only required for the second part.

%Theorem \ref{mainthm2} yields the following corollary.

%\begin{cor}
%\label{maincor2}
%Suppose $(M^{n+1},g)$ is a timelike geodesically complete cosmological spacetime possessing a conformal Killing vector field $X \in \mathfrak{X}(M)$. With the notation in Theorem \ref{mainthm2}, if $(M,\hat{g}|_{M})$ also satisfies the TCC, then $(M,g)$ splits.
%\end{cor}

Our last main result extends Theorem \ref{GVThm2} to a broader context insofar as we assume the TCC but not Ricci-flatness. However,
in order to prove the splitting result we have been unable to do without imposing an additional geometric condition to control the asymptotic behavior of the scalar curvature and the conformal Killing vector field. This control need only be enforced along one timelike geodesic.

\begin{thm}
\label{mainthm4}
Assume that the following holds for the globally hyperbolic spacetime $(M^{n+1},g)$ with $n \geq 2$:
\begin{itemize}
\item[i)] the TCC is satisfied and $(M,g)$ has compact Cauchy hypersurfaces (so that $(M,g)$ is cosmological);
\item[ii)] $(M,g)$ is timelike geodesically complete;
\cambios{\item[iii)] The timelike conformal Killing vector field $X$ satisfies at least one of the following conditions:
\begin{itemize}
\item[iii.1)] $X$ is complete, or
\item[iii.2)] for some future-complete timelike geodesic $\gamma:[0,+\infty) \rightarrow M$ there exists a number $A>0$ such that
\begin{equation}
\label{bound}
(\sigma\circ \gamma)^2 - \frac{(\beta \circ \gamma)^2}{n(n-1)}R\circ \gamma \leq A \mbox{ along $\gamma$},
\end{equation}
where $R$ is the scalar curvature of $(M,g)$, and $\beta := \sqrt{- g(X,X)}$, and $\sigma \in C^{\infty}(M)$ is as in Eq. (\ref{conformalfield}) (see below), or
\item[iii.3)] $\sigma$ is bounded.
\end{itemize}
}
\end{itemize}
Then $(M,g)$ splits as in the Bartnik conjecture.
\end{thm}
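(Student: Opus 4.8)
The plan is to reduce the desired splitting to the existence of a single \emph{timelike line} in $(M,g)$. Once such a line is available, hypotheses (i)--(ii) (the TCC together with timelike geodesic completeness) let us invoke the Lorentzian Splitting Theorem \cite{E,EG,BE} and conclude that $(M,g)$ splits. Since any globally hyperbolic spacetime with compact Cauchy hypersurfaces admits a timelike or null geodesic line (Ch.~8 of \cite{BE}), it suffices to exclude the null alternative; and as recalled in the Introduction, the absence of null lines is guaranteed by the no-observer-horizon condition (\ref{NOH}), after which the remaining timelike line is automatically complete by (ii). Thus the whole argument is organized around establishing (\ref{NOH}), and the three sub-cases iii.1)--iii.3) are three independent routes to it.

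The main device is the conformal rescaling $\tilde g := \beta^{-2} g$, with $\beta=\sqrt{-g(X,X)}$. Feeding $X$ into both slots of (\ref{conformalfield}) yields the pointwise identity $X\beta=\sigma\beta$, and a direct computation then shows that $X$ is a \emph{unit timelike Killing field} for $\tilde g$. Because global hyperbolicity, the class of Cauchy hypersurfaces, and the condition (\ref{NOH}) depend only on the causal structure, $(M,\tilde g)$ is again globally hyperbolic with compact Cauchy hypersurfaces, and (\ref{NOH}) holds for $(M,\tilde g)$ if and only if it holds for $(M,g)$. Consequently, as soon as the flow of $X$ is \emph{complete}, Theorem \ref{mainthm1} applied to $(M,\tilde g)$ delivers (\ref{NOH}) for $(M,g)$. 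This settles case iii.1) at once: if $X$ is complete the conclusion follows from Theorem \ref{mainthm1}, equivalently from the implication $(iv)\Rightarrow(i)$ of Corollary \ref{maincor1}. The content of cases iii.2) and iii.3) is therefore to recover this completeness, or to produce the line directly, when $X$ is allowed to be incomplete.

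For case iii.3) I would exploit the behaviour of $\beta$ along the unit integral curves of $U:=X/\beta$. Parametrizing such a curve by its $g$-proper time $\tau$, the identity $X\beta=\sigma\beta$ becomes the clean relation $\frac{d\beta}{d\tau}=\sigma$, while the flow parameter $s$ of $X$ obeys $ds=\beta^{-1}\,d\tau$; completeness of $X$ is thus equivalent to divergence of $\int\beta^{-1}\,d\tau$. Boundedness of $\sigma$ forces at most linear growth $\beta(\tau)\le\beta(0)+(\sup|\sigma|)\,\tau$, which controls the conformal factor relating $g$ and $\tilde g$ and, against the backdrop of global hyperbolicity and compact Cauchy hypersurfaces, should upgrade the $g$-completeness in (ii) to timelike geodesic completeness of $(M,\tilde g)$. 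Garfinkle--Harris \cite{GH} then makes the Killing field $X$ complete in $(M,\tilde g)$, i.e.\ makes its flow complete, and we close exactly as in iii.1). (This is the route by which \cite{GH} also intervenes in Corollary \ref{maincor1}.)

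Case iii.2) is where the real work lies, and the phrasing of the hypothesis---control along a \emph{single} future-complete geodesic $\gamma$---signals that here one should construct the line directly. The plan is a Riccati/Raychaudhuri analysis along $\gamma$: the TCC supplies the focusing sign $Ric(\gamma',\gamma')\ge 0$, the conformal symmetry controls the inhomogeneous terms through $\sigma$ and $\beta$, and the scalar curvature $R$ enters via the trace (Hamiltonian) part of the constraint, so that the combination $(\sigma\circ\gamma)^2-\frac{(\beta\circ\gamma)^2}{n(n-1)}R\circ\gamma$ appearing in (\ref{bound}) is precisely the conserved ``energy'' of the resulting equation; the bound then prevents the associated expansion (equivalently the Busemann function built from $\gamma$) from running off, forcing $\gamma$ to be a future timelike \emph{ray}. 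A standard limiting argument, using the compact Cauchy hypersurface to extract a past-directed family of maximal segments and passing to the limit, upgrades this ray to a full timelike line, and splitting follows as before. I expect the principal obstacle to be concentrated here, and to be twofold: first, pinning down the exact second-order equation governing the quantity in (\ref{bound}) and confirming that (\ref{bound}) is the correct first integral taming it under the TCC---this is the step in which the Ricci-flatness of Theorem \ref{GVThm2} is traded for the weaker one-geodesic hypothesis; and second, executing the ray-to-line upgrade so as to guarantee that the limit curve is timelike (not null) and inextendible. The completeness arguments behind iii.3) are softer, but still require the conformal-completeness/exhaustion point above to be made rigorous.
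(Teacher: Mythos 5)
Your reduction is sound only in case iii.1), where it coincides with the paper's own argument (Theorem \ref{mainthm1}, equivalently $(iv)\Rightarrow(i)$ of Corollary \ref{maincor1}). For iii.2) and iii.3) --- the actual content of the theorem --- both of your routes have genuine gaps, and you miss the paper's central device: the conformal extension $(\hat{M},\hat{g})$ of Theorem \ref{mainthm2}, in which $X$ extends to a complete unit Killing field and $M$ sits as an open, causally convex subset with boundary pieces $\partial_{\pm}M$. Concretely, in iii.3) you try to prove that the flow of $X$ is complete by upgrading $g$-timelike completeness to completeness of $\tilde{g}=\beta^{-2}g$ and then invoking Garfinkle--Harris. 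But geodesic completeness is not conformally invariant, and your linear bound $\beta(\tau)\le\beta(0)+\sup|\sigma|\,\tau$ holds along integral curves of $U=X/\beta$, which are geodesics of \emph{neither} metric, so it gives no control on $\tilde{g}$-geodesics; moreover an inextendible $U$-curve may have finite $g$-proper time, in which case your criterion $\int\beta^{-1}d\tau=\infty$ simply fails and nothing is concluded. Worse, the Garfinkle--Harris lemma \cite{GH} presupposes the nonsingularity you are trying to manufacture: rescaled de Sitter, $(M,\tilde{g})$ isometric to the slab $\bigl((-\pi/2,\pi/2)\times\mathbb{S}^n,-d\tau^2\oplus\omega_n\bigr)$, is globally hyperbolic with compact Cauchy hypersurfaces and carries the \emph{incomplete} unit timelike Killing field $X$; only its $\tilde{g}$-incompleteness is compatible with the lemma. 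So ``timelike Killing $+$ compact Cauchy'' alone does not yield completeness, and the completeness of $(M,\tilde{g})$ is exactly the unproved link in your chain. The paper never proves $X$ complete under iii.3): it argues by contradiction, noting that if $(M,g)$ does not split then a null line exists, Proposition \ref{line1} forces its endpoints onto $\partial_{\pm}M$, hence $\overline{M}$ is compact in $\hat{M}$ and every $X$-orbit is incomplete (Corollary \ref{bartnikpartial1}); Corollary \ref{bartnikpartial2} then makes $\beta$ \emph{unbounded} along every inextendible timelike geodesic, while bounded $\sigma=X(\log\beta)$ (cf. (\ref{conformalfunction})) keeps $\beta$ bounded --- a contradiction.

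In iii.2) your Riccati picture misidentifies the role of (\ref{bound}): the quantity $(\sigma\circ\gamma)^2-\frac{(\beta\circ\gamma)^2}{n(n-1)}R\circ\gamma$ is not a conserved ``energy'' of a focusing equation along $\gamma$; it arises as one term in the conformal transformation law for the Ricci tensor between $g$ and $\hat{g}$, evaluated on $\hat{\gamma}'$, where $\hat{\gamma}$ is the $\hat{g}$-arclength reparametrization of $\gamma$ --- of \emph{finite} length $\hat{\ell}<\infty$, again because $\overline{M}$ is compact in $\hat{M}$ under the no-splitting assumption. Combined with the TCC and a compactness bound on $\hat{Ric}$, this yields the differential inequality $(n-1)(\log\beta\circ\hat{\gamma})''\le\hat{Ric}(\hat{\gamma}',\hat{\gamma}')+D$, and the proof closes with a delicate integration-by-parts argument contradicting the unboundedness of $\beta\circ\hat{\gamma}$ supplied by Corollary \ref{bartnikpartial2}. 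Your alternative plan --- show $\gamma$ is a ray and ``upgrade'' it to a timelike line --- has no mechanism behind either step: a curvature/conformal bound along a single geodesic together with the TCC does not make $\gamma$ maximizing, and the standard limit construction of lines in compact-Cauchy spacetimes can perfectly well produce a \emph{null} line, which is precisely the degenerate case the theorem must handle; Theorem \ref{mainthm1} (the NOH route) is unavailable here because $X$ is incomplete. The paper's key inversion, absent from your proposal, is that it does not try to exclude the null line at all in cases iii.2)--iii.3): it \emph{uses} it, inside $\hat{M}$, to compactify $\overline{M}$ and force the boundary structure from which the contradiction is extracted.
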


\begin{rem}
\label{imprmk}
 The latter result is compatible with Theorem \ref{GVThm2} in the following concrete sense. If $Ric=0$ then we immediately have $R=0$, and the results in Ref. \cite{GV2} imply that $\sigma \equiv 0$. \cambios{Therefore, the bounds in items $(iii.2)$ and $(iii.3)$ of Theorem \ref{mainthm4} are automatically satisfied.}
\end{rem}

The rest of the paper is organized as follows. In section \ref{S1} we give some basic technical preliminaries, mainly to establish the terminology and notation. In section \ref{S2} the proofs of Theorem \ref{mainthm1} and Corollary \ref{maincor1} are given, while in \ref{S3} we prove Theorem \ref{mainthm2}. In section \ref{S4}, however, not only Theorem \ref{mainthm4} itself is proven, but we discuss in detail the geometric structure of the conformal embedding described in Theorem \ref{mainthm2} when the conformal Killing vector field $X$ is incomplete. A number of ancillary results are proven therein which are of independent interest.

\section{Preliminaries}\label{S1}

Throughout this paper, we shall fix once and for all a {\em spacetime}, i.e., a connected, Hausdorff, second-countable time-oriented $C^{\infty}$ Lorentzian manifold $(M^{n+1},g)$ with $dim \, M = n+1 \geq 2$. We shall assume that the reader is familiar with standard facts in Lorentz geometry and causal theory as given in the basic references \cite{BE,HE,oneill}.

In this section, we establish some preliminary technical facts which will be useful in the proof of our main theorems. We also recall a few standard definitions in order to settle the precise terminology and notation we shall use throughout the paper.

Let $Z \in \mathfrak{X}(M)$ be a timelike vector field (not necessarily conformal Killing). Its maximal integral curves define a $1$-dimensional foliation of $M$. We shall denote the set of leaves of this foliation by $Q = Q_Z$, and the standard projection which takes each $p\in M$ to the leaf through $p$ by $\pi = \pi_Z: M \rightarrow Q$. Then we have the following

\begin{prop}
\label{first}
Suppose $(M,g)$ is globally hyperbolic and let $Z \in \mathfrak{X}(M)$ be a timelike vector field. Then, the following facts hold.
\begin{itemize}
\item[i)] There exists a unique topology and differentiable structure on $Q$ which makes it a smooth $n$-manifold for which $\pi:M \rightarrow Q$ is a smooth submersion. Moreover, $M$ is (non-canonically) diffeomorphic to $\mathbb{R}\times Q$. (In particular, $Q$ is homeomorphic to any given Cauchy hypersurface $S \subset M$.)
\item[ii)] If, in addition, $Z$ is Killing, then there exists a unique Riemannian metric $h$ on $Q$ such that the canonical projection $\pi:M \rightarrow Q$ is a semi-Riemannian submersion, and the metric $g$ can then be written as
\begin{equation}
\label{stationarymetric}
g = g(Z,Z)\theta_Z\otimes \theta_Z + \pi^{\ast} h,
\end{equation}
where
\[
\theta_Z := g(\frac{Z}{g(Z,Z)},\, .\, ).
\]
\end{itemize}
\end{prop}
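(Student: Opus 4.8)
The plan is to establish the two items in turn, extracting everything from two standard facts about globally hyperbolic spacetimes: the existence of a smooth spacelike Cauchy hypersurface, and the property that an inextendible causal curve meets such a hypersurface exactly once. For item (i) I would fix a smooth spacelike Cauchy hypersurface $S\subset M$ (see, e.g., \cite{BE}); since $Z$ is timelike it is everywhere transverse to $S$. The key observation is that each maximal integral curve of $Z$ is an inextendible timelike curve — a maximal integral curve cannot possess an endpoint, for otherwise it could be prolonged through that point by the flow, contradicting maximality. Being an inextendible causal curve, it therefore meets $S$ in exactly one point, so $\pi|_S\colon S\to Q$ is a bijection, and I would use it to transport the differentiable structure of $S$ onto $Q$. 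To exhibit simultaneously the submersion property and the product structure, I would replace $Z$ by a complete rescaling $\tilde Z=fZ$ with $f>0$ smooth: this has the same integral curves as point sets, hence the same leaf space $Q$ and projection $\pi$, and its flow $\tilde\phi$ yields a map $\R\times S\to M$, $(s,x)\mapsto \tilde\phi_s(x)$, which is a bijection by the ``meets $S$ once'' property and a local diffeomorphism by transversality (the flow carries $\tilde Z$ to itself, so transversality at $S$ propagates along every curve), hence a diffeomorphism. This gives $M\cong \R\times S\cong\R\times Q$, under which $\pi$ becomes projection onto the second factor and is therefore a smooth submersion. Uniqueness of the differentiable structure follows from the universal property of surjective submersions (two such structures render the identity of $Q$ smooth in both directions), and, since all Cauchy hypersurfaces of a globally hyperbolic spacetime are mutually homeomorphic, $Q\cong S$ yields the homeomorphism with any Cauchy hypersurface.

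For item (ii), with $Z$ now Killing, I would work with the symmetric $2$-tensor $\bar g:=g-g(Z,Z)\,\theta_Z\otimes\theta_Z$ on $M$. Using $\theta_Z(Z)=1$ one checks $\iota_Z\bar g=0$, so $\bar g$ is horizontal; and since $Z$ is Killing one has $\mathcal L_Z g=0$, $Z\big(g(Z,Z)\big)=0$ and $\mathcal L_Z\theta_Z=0$, whence $\mathcal L_Z\bar g=0$, i.e. $\bar g$ is invariant under the flow. A horizontal, flow-invariant tensor is basic and therefore descends to a unique symmetric $2$-tensor $h$ on $Q$ with $\pi^{\ast}h=\bar g$; rearranging this identity is exactly Eq. (\ref{stationarymetric}). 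On $Z$-orthogonal (horizontal) vectors $\bar g$ agrees with $g$, which is positive definite there because $Z$ is timelike; hence $h$ is Riemannian and $\pi$ is a semi-Riemannian submersion. Uniqueness of $h$ is immediate, since $\pi^{\ast}$ is injective on tensors.

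The only genuinely delicate point is item (i): one must verify that the maximal integral curves are inextendible causal curves so that the Cauchy property applies, and then assemble a global product diffeomorphism even when $Z$ is incomplete. The rescaling of $Z$ to a complete field with the same foliation is what neutralizes this incompleteness cleanly; once the quotient manifold is in hand, item (ii) reduces to the short tensorial computation indicated above.
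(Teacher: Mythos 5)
Your proposal is correct, but for item (i) it takes a genuinely different route from the paper. The paper also begins with the rescaling trick ($Y=fZ$ complete, same foliation), but then treats the complete flow as a smooth action of $(\mathbb{R},+)$ on $M$, invokes Corollary 3.14 of \cite{splittingus} to get that this action is free and proper, and obtains $Q$ via the quotient manifold theorem, with $\pi\colon M\to Q$ a (necessarily trivial, hence product) principal $\mathbb{R}$-bundle. You instead build everything from a smooth spacelike Cauchy hypersurface $S$ used as a global section of the orbit foliation: the flow map $\mathbb{R}\times S\to M$ is a bijective local diffeomorphism, hence a diffeomorphism, and the smooth structure on $Q$ is transported through $\pi|_S$. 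Your route is more elementary and self-contained (no properness of the action, no quotient manifold theorem), at the price of leaning on the existence of a \emph{smooth spacelike} Cauchy hypersurface --- note that this is a Bernal--S\'anchez result rather than something found in \cite{BE}, though the paper itself uses the same fact freely elsewhere, so this is only a citation quibble. The paper's principal-bundle formulation, on the other hand, is not gratuitous: it is reused later (in Proposition 2.6, where $\theta$ is interpreted as a connection $1$-form on the bundle $\pi\colon M\to Q$ so as to apply the completeness result of \cite{RS}), so the authors get that extra structure for free. Two small points in your write-up deserve tightening: injectivity of the flow map also needs that orbits are non-periodic, which follows from chronology (global hyperbolicity forbids closed timelike curves); and your inextendibility argument ("prolong through the endpoint") covers the finite-parameter case, whereas ruling out an endpoint of a complete orbit (parameter $\to\infty$) requires the flow-box observation that a curve accumulating at a point where $Z\neq 0$ would have to leave every flow box around it --- both are routine, and neither is a gap. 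For item (ii) your argument is essentially the paper's in explicit tensorial form: showing $\bar g:=g-g(Z,Z)\,\theta_Z\otimes\theta_Z$ is horizontal and flow-invariant, hence basic, is exactly the content of the paper's remark that the Killing property makes the horizontal scalar products constant along orbits, so $h$ is well defined by the submersion condition.
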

\begin{proof}$(i)$\\
Pick any positive $f \in C^{\infty}(M)$ for which $Y:= f\cdot Z$ is complete. The maximal integral curves of $Y$ and $Z$ clearly define the same foliation, so in particular $Q_Y = Q_Z = Q$. Let $\phi: \mathbb{R}\times M \rightarrow M$ be the flow of $Y$. Since $Y$ is complete, $\phi$ is a smooth action of the abelian group $(\mathbb{R},+)$ on $M$. By Corollary 3.14 of \cite{splittingus}, the action $\phi$ is free and proper. With respect of this action, $\pi$ then defines on $M$ the structure of a (necessarily trivial) principal $\mathbb{R}$-bundle over $Q$. See, e.g., in \cite{Lee}, p. 218, Thm. 9.16 of \cite{Lee}, Sections 12 and 13 of \cite{Vandenban}, and Theorem 5.7, p. 58 of \cite{KN} for more details. \\
$(ii)$\\
Recall (see, e.g., Ch. 77, p. 212, Def. 44 of \cite{oneill}) that $\pi:(M,g) \rightarrow (Q,h)$ being a semi-Riemannian submersion means that $\forall x \in Q$, the fiber $\pi^{-1}(x)$ is a semi-Riemannian submanifold of $M$, which here is a trivial requirement, and $d\pi$ preserves scalar products in vectors normal to fibers (`horizontal spaces'). The latter condition can be used to {\em define} $h$: it is well-defined because since $Z$ is Killing, horizontal spaces are preserved along the integral curves thereof. The decomposition (\ref{stationarymetric}) then follows from a direct computation.
\end{proof}

%IRROTATIONAL
%
%Recall that a timelike vector field $Z \in \mathfrak{X}(M)$ (again, not necessarily conformal Killing) is said to be {\em irrotational} if the orthogonally complementary distribution $Z^{\perp} \subset TM$ is integrable. The following standard facts can be immediately deduced from this definition.
% \begin{itemize}
% \item[1)] Irrotationality is a {\em conformally invariant} notion: if $Z$ is irrotational in $(M,g)$, then it is also irrotational in $(M,\Omega^2g)$, where $\Omega \in C^{\infty}(M)$ is any smooth positive real-valued function.
% \item[2)] $Z$ is irrotational if and only if $\forall\, V,W \in \mathfrak{X}(M)$,
% \[
% V,W \perp Z \Longrightarrow [V,W] \perp Z.
% \]
% \item[3)] $Z$ is irrotational if and only if the {\em curl} $2$-form $curl \, Z \in \Omega^2(M)$, given by
% \[
% curl \, Z(V,W) := g(\nabla _V Z,W) - g(\nabla_W Z,V), \forall\, V,W \in \mathfrak{X}(M),
% \]
% vanishes for $V,W$ in $Z^{\perp}$.
% \item[4)] $Z$ is irrotational if and only if the unit field $Z/(-g(Z,Z))^{1/2}$ is irrotational.
% \end{itemize}

We shall say that $(M,g)$ is {\em conformastationary} if there exists some $X \in \mathfrak{X}(M)$ which is everywhere a {\em timelike conformal Killing vector field}, i.e.,
 \begin{equation}
 \label{conformalfield}
 {\cal L}_Xg = 2 \sigma g
 \end{equation}
 for some function $\sigma \in C^{\infty}(M)$, where here and hereafter ${\cal L}_X$ denotes the Lie derivative with respect to $X$. Of course, $X$ is Killing if and only if $\sigma \equiv 0$, in which case we say that $(M,g)$ is {\em stationary} (with respect to $X$). From (\ref{conformalfield}) we easily deduce that
 \begin{equation}
 \label{conformalfunction}
 \sigma:= \frac{X(g(X,X))}{2 g(X,X)}.
 \end{equation}

 {\em In this paper, we {\em do not} take a given conformal Killing vector field $X$ on $(M,g)$ to be complete, unless otherwise explicitly stated}. On the other hand we do require it to be timelike everywhere\footnote{The reader should be aware that such a completeness assumption is often included as part of the definition of `conformastationary (resp. stationary) spacetime' in the literature. Moreover, sometimes the timelike character is imposed only in some asymptotic sense, especially in the study of stationary black holes.}.

%$(M,g)$ is said to be {\em static} if it admits an {\em irrotational} timelike Killing vector field $X \in \mathfrak{X}(M)$.

\begin{exe}(Standard (conforma)stationary spacetimes)\\
\label{ex1}
Let $(M_0,g_0)$ be any smooth Riemannian $n$-manifold. On $M_0$, pick a smooth, real-valued, strictly positive function $\beta_0$, and a smooth 1-form $\omega_0 \in \Omega^1(M_0)$. Fix also a strictly positive smooth function $\Lambda_0 \in C^{\infty}(\mathbb{R}\times M_0)$. Then, the {\em standard conformastationary spacetime} associated with the data $(M_0,g_0,\beta_0,\omega_0,\Lambda_0)$ is $(M, g)$, where $M:= \mathbb{R} \times M_0$, and
\begin{equation}
\label{standardstationary}
g = \Lambda_0^2(-\beta ^2 d\pi_1 \otimes d\pi_1 + \omega \otimes d\pi_1 + d\pi_1 \otimes \omega + \pi_2^{\ast}g_0),
\end{equation}
where $\beta := \beta_0 \circ \pi_2$, $\omega := \pi_2^{\ast} \omega_{0}$, and $\pi_1$ [resp. $\pi_2$] is the projection of $M$ onto the $\mathbb{R}$ [resp. $M_0$] factor. The time-orientation of $(M,g)$ is chosen such that $\partial_t$, the lift to $M$ of the standard vector field $d/dt$ on $\mathbb{R}$, is future-directed. The vector field $\partial_t$ is then a timelike conformal Killing vector field and $(M,g)$ is indeed conformastationary. If $\Lambda_0\equiv 1$, then $(M,g)$ is said to be {\em standard stationary} (for the respective data), and if in addition $\omega_{0} \equiv 0$, then $(M,g)$ thus defined is said to be {\em standard static}.

The following facts about the standard conformastationary metric (\ref{standardstationary}) will be useful to us here:
\begin{itemize}
\item[1)] The timelike conformal Killing vector field $\partial_t$ is complete.
\item[2)] $\pi_1$ is a smooth {\em temporal function}, i.e., it has timelike gradient \cite{JS}, and hence the hypersurfaces $\{t\} \times M_0$ are acausal and spacelike for each $t \in \mathbb{R}$. In particular, the spacetime is stably causal \cite{JS}.
\end{itemize}
\end{exe}

Next, consider the following well-known result, obtained in a broader context in Ref. \cite{JS}, and with a much simplified proof in the particular case of globally hyperbolic spacetimes in \cite{CFS}:

\begin{prop}
\label{firstt}
 Let $(M,g)$ be a globally hyperbolic spacetime admitting a complete timelike conformal Killing vector field $X \in \mathfrak{X}(M)$. Then, $(M,g)$ is isometric to a standard conformastationary spacetime. Indeed, with the notation as in (\ref{standardstationary}), $M_0$ can be chosen to be a compact Cauchy hypersurface.
\end{prop}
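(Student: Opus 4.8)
The plan is to reduce the conformastationary case to the stationary one by a conformal change of metric, and then to read off the standard form from the flow of $X$ using a smooth spacelike Cauchy hypersurface as a global cross-section.

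First I would conformally rescale $g$ so that $X$ becomes genuinely Killing. Seeking a positive $\Omega \in C^{\infty}(M)$ and setting $\tilde g := \Omega^2 g$, a one-line computation from (\ref{conformalfield}) gives $\mathcal L_X \tilde g = 2\big(X(\log\Omega)+\sigma\big)\tilde g$, so it suffices to solve the linear first-order equation $X(\log \Omega) = -\sigma$. This is exactly where completeness of $X$ is used. Fix a smooth spacelike (hence acausal) Cauchy hypersurface $S$ (which exists by the Geroch/Bernal--S\'anchez theory), let $\psi$ denote the $\mathbb R$-parametrized flow of $X$, prescribe $\log\Omega\equiv 0$ on $S$, and propagate along the flow via $\log\Omega(\psi_t(x)) := -\int_0^t (\sigma\circ\psi_s)(x)\,ds$. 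Completeness of $X$ guarantees this is defined for all $t\in\mathbb R$, and smooth dependence on initial data makes $\Omega$ smooth and strictly positive. A conformal rescaling changes neither the causal structure — so $(M,\tilde g)$ is again globally hyperbolic with the same Cauchy hypersurfaces, and $S$ stays $\tilde g$-spacelike — nor the integral curves of $X$, so $X$ remains a \emph{complete timelike} field for $\tilde g$; by construction it is now Killing.

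Next I would produce the global splitting. Each maximal integral curve of the complete timelike field $X$ is an inextendible timelike, hence causal, curve, so it meets the Cauchy hypersurface $S$ exactly once; and since $X$ is timelike it is transverse to the spacelike $S$. Therefore $\Psi:\mathbb R\times S \to M$, $\Psi(t,x):=\psi_t(x)$, is a smooth bijective local diffeomorphism, hence a diffeomorphism, with $\Psi_{\ast}\partial_t = X$. (This refines Proposition \ref{first}(i): the free and proper $\mathbb R$-action there furnishes a trivial bundle $M\cong \mathbb R\times Q$, but here I use the \emph{spacelike} section $S$ so that slices will be spacelike.) Transporting $\tilde g$ to $\mathbb R\times S$ by $\Psi$, the fact that $\partial_t = X$ is $\tilde g$-Killing forces all components of $\tilde g$ to be independent of $t$; equivalently each slice $\{t\}\times S = \psi_t(S)$ is the image of $S$ under the isometry $\psi_t$, hence $\tilde g$-spacelike with one and the same induced Riemannian metric $g_0$ on $S$. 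Writing $\beta := \sqrt{-\tilde g(X,X)}>0$ and the $1$-form $\omega := \tilde g(X,\cdot)|_{TS}$ — both $t$-independent, i.e. pulled back from $S$ — gives precisely the standard \emph{stationary} form of (\ref{standardstationary}) (with $\Lambda_0\equiv 1$) for $\tilde g$ over $M_0 = S$. Finally, undoing the rescaling, $g = \Omega^{-2}\tilde g$ is of the form (\ref{standardstationary}) with $\Lambda_0 := \Omega^{-1}\in C^{\infty}(\mathbb R\times S)$, so $(M,g)$ is isometric, via $\Psi$, to the standard conformastationary spacetime determined by the data $(S,g_0,\beta,\omega,\Lambda_0)$, with $M_0 = S$ a Cauchy hypersurface; if the Cauchy hypersurfaces of $(M,g)$ are compact, then $S$ may be chosen compact, yielding the final assertion.

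The main obstacle is the cross-section step: verifying that the spacelike Cauchy hypersurface $S$ is a genuine global cross-section of the flow (each integral curve hits it exactly once) and that $\Psi$ is consequently a global diffeomorphism with $\Psi_{\ast}\partial_t = X$. This is the point where global hyperbolicity, the Cauchy property, and completeness of $X$ must be combined, and it is also what forces the completeness hypothesis to be genuinely used rather than cosmetic. By contrast, the conformal computation and the coordinate verification of the standard form are routine once this structural step is in place.
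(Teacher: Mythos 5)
Your proposal is correct in substance, but note first that the paper does not actually prove Proposition \ref{firstt}: it quotes it as a known result from \cite{JS}, with a simplified proof for the globally hyperbolic case in \cite{CFS}, and your argument is essentially a reconstruction of that simplified proof (conformal gauge change plus the flow of $X$ over a smooth spacelike Cauchy hypersurface used as a global cross-section). Three remarks. First, as ordered your presentation is circular: the prescription $\log\Omega(\psi_t(x)) := -\int_0^t (\sigma\circ\psi_s)(x)\,ds$ is only well defined once you know that every $p \in M$ equals $\psi_t(x)$ for a \emph{unique} $(t,x) \in \mathbb{R}\times S$, i.e.\ once the cross-section step is already done; since that step uses nothing about $X$ being Killing rather than conformal Killing, you should simply do it first. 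Moreover, your ODE for $\Omega$ is avoidable: the paper's own observation (\ref{starmetric}), that $\tilde{g} = g/\beta^2$ makes $X$ a \emph{unit} Killing field, achieves the same with no integration, yields $\beta_0 \equiv 1$ and $\Lambda_0 = \beta$ (transported by $\Psi$), and is precisely the gauge the paper exploits later in the proof of Theorem \ref{mainthm2}. Second, your assertion that each maximal integral curve of the complete field $X$ is inextendible — the hinge of the cross-section step — deserves its one-line justification: if $\psi_t(x) \to p$ as $t \to +\infty$, then for each fixed $s$ one has $\psi_{t+s}(x) = \psi_s(\psi_t(x)) \to \psi_s(p)$ and also $\psi_{t+s}(x) \to p$, so $\psi_s(p) = p$ for all $s$ and hence $X(p) = 0$, contradicting that $X$ is timelike; without this, an orbit could a priori fail to meet $S$ even though its inextendible extension does. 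Third, you are right to read the final clause conditionally: as literally stated, ``$M_0$ compact'' cannot follow from the hypotheses of the proposition (Minkowski space satisfies them), and only makes sense under the standing assumption, in force throughout the paper's applications, that $(M,g)$ has compact Cauchy hypersurfaces — your conditional formulation is the correct repair. With these adjustments your proof is complete and agrees with the cited one.
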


Recall that $P \subset M$ [resp. $F \subset M$] is a {\em past set} [resp. {\em future set}] if $P = I^-(P)$ [resp. $F = I^+(F)$]. Note that past and future sets are always open. A non-empty set $A \subset M$ is said to be an {\em achronal boundary} if $A = \partial P$ for some past set $P$. In this case, it is easy to check that $F:= M \setminus \overline{P}$ is a future set and $A \equiv \partial F$, so we may alternatively define an achronal boundary as the - non-empty - boundary of a {\em future} set. Indeed, as discussed in detail in Ref. \cite{GV1}, for an achronal boundary $A$ there exists a {\em unique} disjoint decomposition
\[
M = P \dot{\cup} F \dot{\cup} A
\]
where $P$ is a past set, $F$ is a future set, and $A = \partial P = \partial F$. In particular, any achronal boundary {\em separates} $M$, i.e., $M \setminus A$ is  not connected.

As its name suggests, an achronal boundary $A$ is always an achronal edgeless set\footnote{$A$ is then a closed $C^0$ (indeed Lipschitz) hypersurface in $M$, see e.g., Corollary 26, Ch. 14 in \cite{oneill}.}, but the converse is not true in general. The following result, however, exploits an exception which will be of key importance later on and has independent interest.
\begin{prop}
\label{jsmain1}
Suppose $(M,g)$ is chronological (i.e., admits no closed timelike curves) and let $X \in \mathfrak{X}(M)$ be a {\em complete} timelike conformal Killing vector field with flow $\phi$. Then, any (non-empty) achronal edgeless set $A \subset M$ is an achronal boundary, and each orbit of $\phi$ intersects $A$ exactly once. Moreover,
\[
\zeta= \phi |_{\mathbb{R} \times A}: (t,p) \in \mathbb{R} \times A \mapsto \phi_t(p) \in  M
\]
is a homeomorphism. In particular, $A$ is connected and separates $M$. If $(M,g)$ is globally hyperbolic, then $A$ is homeomorphic to any given Cauchy hypersurface $S \subset M$; and if in addition $A$ is compact then it {\em is} a Cauchy hypersurface.
\end{prop}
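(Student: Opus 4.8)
The plan is to use the completeness of $X$ to pass to ``flow coordinates'', in which the whole statement becomes an assertion about the graph of a function over the leaf space. Since $X$ is a complete timelike vector field, Proposition \ref{first} (applied to $Z=X$) supplies the connected smooth $n$-manifold $Q$ of orbits, the submersion $\pi\colon M\to Q$, and a diffeomorphism $M\cong\mathbb{R}\times Q$. I would choose this trivialization adapted to the principal $\mathbb{R}$-action, so that the flow is translation in the first factor, $\phi_s(u,q)=(u+s,q)$; then $X=\partial_u$ is timelike everywhere. The single observation that drives everything is that, $X$ being a \emph{conformal} Killing field, each $\phi_s$ is a conformal diffeomorphism and hence preserves the causal cones; as $\phi_s$ is translation in these coordinates, the cone field on $\mathbb{R}\times Q$ is \emph{independent of the $\mathbb{R}$-coordinate}. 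I record also that, by the footnote, an achronal edgeless $A$ is a closed (Lipschitz) topological hypersurface, so $\mathbb{R}\times A$ is a topological $(n+1)$-manifold.

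First I would dispose of ``at most once'': the orbits of $\phi$ are timelike, so by achronality none can meet $A$ twice. Hence $\pi|_A\colon A\to Q$ is injective, $A$ is the graph of a function $a\colon D\to\mathbb{R}$ over $D:=\pi(A)$, and $\zeta$ is \emph{globally} injective (if $\phi_{t_1}(p_1)=\phi_{t_2}(p_2)$ then $p_1,p_2$ lie on a common orbit, forcing $p_1=p_2$ and $t_1=t_2$). Next, since $\zeta\colon\mathbb{R}\times A\to M$ is a continuous injection between topological $(n+1)$-manifolds, invariance of domain shows $\zeta$ is an open map and a homeomorphism onto its open image $U=\zeta(\mathbb{R}\times A)=\pi^{-1}(D)$. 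In particular $D$ is open and $a$ is continuous.

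The crux is \emph{surjectivity}, i.e. $D=Q$, equivalently that every orbit meets $A$. As $Q$ is connected and $D$ is open, it suffices to prove $D$ is closed, and the only failure mode is $a(q_k)\to\pm\infty$ as $q_k\to q_0\in\partial D$ (if the $a(q_k)$ stayed bounded, a subsequential limit would lie in $\overline A=A$, placing $q_0\in D$). Here the $\mathbb{R}$-invariance of the cone field is decisive: fixing an auxiliary distance $d$ on $Q$ and a compact neighborhood $\bar B$ of $q_0$, the future cones contain $\partial_u$ with a uniform opening on the slice $\bar B\times\{0\}$, and by $\mathbb{R}$-invariance the same opening holds at every height $u$. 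Thus there is a constant $L$ so that any two points $(u_1,q_1),(u_2,q_2)$ with $q_1,q_2\in\bar B$ and $|u_1-u_2|>L\,d(q_1,q_2)$ are timelike related \emph{regardless of how large the $u_i$ are}. Achronality then forces $|a(q_1)-a(q_2)|\le L\,d(q_1,q_2)$ on $D\cap\bar B$, so $a$ is locally Lipschitz; this rules out the blow-up, the $a(q_k)$ converge, and closedness of $A$ gives $q_0\in D$. Hence $D=Q$, $\zeta$ is a homeomorphism, and each orbit meets $A$ exactly once. I expect this uniform-Lipschitz-from-$\mathbb{R}$-invariance step to be the main obstacle, precisely because under the bare chronology hypothesis one has no a priori control of the metric at large $u$; the conformal symmetry is what supplies it.

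It then remains to harvest the consequences. Writing any point as $\phi_t(p)$ with $p\in A$ and reading off the sign of $t$ yields the partition $M=I^-(A)\,\dot{\cup}\,A\,\dot{\cup}\,I^+(A)$, the three pieces being pairwise disjoint by achronality together with chronology. Setting $P:=I^-(A)$, a past set, I would check $A\subseteq\overline P\setminus P$ by flowing points of $A$ slightly to the past, while $\partial P\subseteq A$ because $P$ and $I^+(A)$ are open; hence $\partial P=A$, so $A$ is an achronal boundary and separates $M$ via $M\setminus A=P\,\dot{\cup}\,I^+(A)$. Since $A$ is the graph of the continuous $a$ over all of $Q$, the map $\pi|_A\colon A\to Q$ is a homeomorphism, whence $A$ is connected ($Q$ being connected). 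Finally, if $(M,g)$ is globally hyperbolic, Proposition \ref{first}(i) gives $Q\cong S$ for any Cauchy hypersurface $S$, so $A\cong S$; and if in addition $A$ is compact, then $A$ is a compact achronal edgeless hypersurface in a globally hyperbolic spacetime and is therefore a Cauchy hypersurface by the standard characterization of such hypersurfaces.
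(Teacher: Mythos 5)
Your route---trivialize by the flow, view $A$ as a graph over the leaf space, and use conformal invariance of the light cones to get a uniform Lipschitz bound on the graph function---is attractive, but it opens with a mis-scoped citation that quietly assumes the hardest point. Proposition \ref{first} is stated, and proved (freeness and properness of the flow action via Corollary 3.14 of \cite{splittingus}), only for \emph{globally hyperbolic} spacetimes, whereas Proposition \ref{jsmain1} assumes merely that $(M,g)$ is chronological. Under chronology alone, freeness of the $\mathbb{R}$-action is immediate (a periodic timelike orbit would be a closed timelike curve), but \emph{properness} is not, and without properness you have no Hausdorff manifold $Q$, no auxiliary distance $d$, no compact neighborhoods $\bar{B}$, and no equivariant diffeomorphism $M\cong\mathbb{R}\times Q$ with the flow acting by translations---every ingredient of your crux. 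Properness \emph{can} be proved under chronology, but the proof is essentially the paper's own key step: if $p_k\to p$, $t_k\to+\infty$ and $\phi_{t_k}(p_k)\to q$, one invokes Corollary 3.2 of \cite{HL} (omniscience for a complete timelike conformal Killing field: $M=I^{\pm}(\gamma)$ for every orbit $\gamma$) to find $s$ and $r$ with $\phi_s(q)\ll r\ll p$, whence $r\ll p_k\ll\phi_{t_k+s}(p_k)\ll r$ for large $k$, a closed timelike curve. This Harris--Low input appears nowhere in your write-up, so as it stands your argument is complete only in the globally hyperbolic case; for the general chronological statement, your first sentence assumes what must be proven.

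Granting the product structure (e.g.\ under global hyperbolicity, where Proposition \ref{first} does apply), the rest of your argument is correct and genuinely different from the paper's. The paper proves that $\zeta(\mathbb{R}\times A)$ is closed directly: for bounded flow parameters by flowing back a limit point, and for unbounded parameters by the omniscience result above; your replacement---the $u$-independence of the cone field (conformal flows preserve cones) yielding a uniform tilted-cone estimate, hence a local Lipschitz bound $|a(q_1)-a(q_2)|\le L\,d(q_1,q_2)$ that forbids blow-up of the graph function at $\partial D$---is a nice quantitative alternative for that second case, though as noted it leans on the trivialization and hence on properness. Your decomposition $M=I^-(A)\,\dot{\cup}\,A\,\dot{\cup}\,I^+(A)$ and the verification that $A=\partial I^-(A)$ are fine and match the paper in spirit. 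One further remark on the final step: you quote the ``standard characterization'' that a compact achronal edgeless set in a globally hyperbolic spacetime is a Cauchy hypersurface (this is the Budic--Isenberg--Lindblom--Yasskin theorem, a legitimate citation, though you should name it); the paper instead gives a short self-contained argument, imprisoning a putative non-crossing inextendible timelike curve in the compact set $\zeta([t(\lambda_0),0]\times A)$ and contradicting strong causality---worth knowing, since it keeps the proof elementary and uses only the homeomorphism $\zeta$ you have already built.
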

\begin{proof} The map $\zeta$ is continuous (smooth if $A$ is smooth), and one-to-one since $A$ is achronal. Since $A$ is a $C^0$ hypersurface, Invariance of Domain implies that $\zeta$ is then a homeomorphism onto a open subset ${\cal O} \subset M$, and each orbit of $\phi$ intersects $A$ at most once. Showing that each such orbit does indeed intersect $A$ and that $\zeta$ is a homeomorphism onto $M$ boil down therefore to showing that ${\cal O} \equiv M$. Since ${\cal O}$ is open and $M$ is connected, all we need to show is that ${\cal O}$ is closed. To this end, consider a sequence $(t_k, x_k)$ in $\mathbb{R} \times A$ and $p \in M$ such that $\phi_{t_k}( x_k) \rightarrow p$.

Assume first that $(t_k)$ is unbounded. We may assume, up to passing to a subsequence that $t_k \rightarrow +\infty$, the argument if $t_k \rightarrow -\infty$ being analogous. Let $\gamma$ be any maximal integral curve of the timelike conformal Killing vector field $X$. Since $X$ is assumed to be complete, we can apply the Corollary 3.2 of \cite{HL} to conclude that
\begin{equation}
\label{omniscience}
M = I^{\pm}(\gamma).
\end{equation}
In particular, taking $\gamma$ to be the orbit of $p$ by $\phi$ we have that $\phi_s(p) \in I^-(x_1)$ for some $s \in \mathbb{R}$. ($x_1$ being the first term in the sequence $(x_k)$!). But then
\[
\phi_{t_k+s}(x_k) = \phi_s(\phi_{t_k}(x_k)) \rightarrow \phi_s(p),
\]
so for large enough $k$ we have $t_k +s >0$ and $\phi_{t_k+s}(x_k) \in I^-(x_1)$; hence,
\[
x_k \ll \phi_{t_k+s}(x_k) \ll x_1,
\]
which contradicts the achronality of $A$. Therefore, $(t_k)$ must be bounded. But in that case, up to passing to a subsequence we may assume that it converges, say, $t_k \rightarrow t_0$. Let $x_0:= \phi_{-t_0}(p)$. Then
\[
x_k = \phi(-t_k,\phi_{t_k}(x_k)) \rightarrow x_0,
\]
and since $A$ is closed we conclude that $x_0 \in A$, and $\zeta(t_0,x_0) = p$, which shows that ${\cal O}$ is closed, as desired.

To see that $A$ is an achronal boundary, let $P := I^-(A)$. $P$ is clearly a past set, and the fact that $A$ is achronal implies that $A \subset \partial P$. Given any $p \in \partial P$, the previous results show that $\phi_t(p) \in A$ for some $t \in \mathbb{R}$. But $\partial P$ is achronal (it is an achronal boundary!), so we must have $t \equiv 0$, which means that $p \in A$. We conclude that $A \equiv \partial P$, so that $A$ is indeed an achronal boundary.

Now, assume that $(M,g)$ is globally hyperbolic. By Proposition \ref{first}(i), the leaf space $Q$ is a smooth $n$-manifold $Q$, and the standard projection $\pi: M \rightarrow Q$ is a smooth onto submersion. Thus, $\pi \circ \zeta \circ i$ is a homeomorphism between $A$ and $Q$, where $i: x \in A \hookrightarrow (0,x) \in \mathbb{R}\times A$. Since $Q$ is homeomorphic to any Cauchy hypersurface, then so is $A$.

Finally, suppose $(M,g)$ is globally hyperbolic and $A$ is compact. Let $\alpha: (a,b) \subset \mathbb{R} \rightarrow M$ be any future-directed, inextendible timelike curve in $(M,g)$ ($-\infty \leq a< b \leq +\infty$). Then $\zeta ^{-1} \circ \alpha: \lambda \in (a,b) \subset \mathbb{R} \mapsto (t(\lambda),x(\lambda)) \in \mathbb{R}\times A$ is continuous, and the fact that $\alpha$ is future-directed implies that $t$ is an increasing function. Fix any $\lambda_0 \in (a,b)$, and suppose $\alpha$ does not intersect $A$. In this case, $t(\lambda)$ is never zero, and we may assume, say, that $t(\lambda) <0$ for all $\lambda \in (a,b)$, since the case if it always $>0$ is entirely analogous. But then (since the $t$-coordinate increases) the curve $\zeta ^{-1} \circ \alpha |_{[\lambda _0, b)}$ stays imprisoned in the compact set $[t(\lambda_0),0]\times A$, and hence the future-inextendible timelike curve $\alpha |_{[\lambda _0, b)}$ stays imprisoned in the compact set $\zeta([t(\lambda_0),0]\times A) \subset M$, which contradicts the strong causality of $(M,g)$ (see for instance Lemma 13, Ch. 14 in \cite{oneill}). We conclude that $\alpha$ intersects $A$. We know it does so exactly once by the achronality of $A$, so $A$ is indeed a Cauchy hypersurface.
\end{proof}

Another relevant fact, which is easy to check, is that {\em if $X$ is a timelike conformal Killing vector field in $(M,g)$, then $X$ is timelike Killing for $(M,\tilde{g})$}, where
 \begin{equation}
 \label{starmetric}
 \tilde{g} :=  (1/\beta^2)g, \;\; \beta := \sqrt{-g(X,X)}.
 \end{equation}

The following characterization will be technically useful later.
\begin{prop}
\label{second}
Assume that $(M,g)$ is globally hyperbolic with compact Cauchy hypersurfaces, and let $X \in \mathfrak{X}(M)$ be a timelike conformal Killing vector field in $(M,g)$. Then, using the notation in (\ref{starmetric}), we have that $X$ is complete if and only if $(M,\tilde{g})$ is geodesically complete.
\end{prop}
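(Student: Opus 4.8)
The plan is to use the remark recorded just after (\ref{starmetric}): $X$ is a \emph{unit timelike Killing vector field} for $\tilde g$, i.e. $\tilde g(X,X)\equiv -1$ and $\mathcal{L}_X\tilde g=0$. Two general facts frame the argument. First, $g$ and $\tilde g$ are conformal, hence share causal structure and Cauchy hypersurfaces, so $(M,\tilde g)$ is again globally hyperbolic with compact Cauchy hypersurfaces. Second, completeness of $X$ \emph{as a vector field} is a purely smooth notion, independent of the metric; thus the statement genuinely compares the integral curves of $X$ with the geodesics of $\tilde g$, and the whole proof proceeds by relating these two families of curves.

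For the easy implication, assume $(M,\tilde g)$ is geodesically complete. Since $X$ is Killing, with $\nabla$ the $\tilde g$-Levi-Civita connection the endomorphism $\nabla X$ is $\tilde g$-skew-adjoint, so for every $Y$ one has $\tilde g(\nabla_X X,Y)=-\tilde g(\nabla_Y X,X)=-\tfrac12 Y(\tilde g(X,X))=0$; hence $\nabla_X X=0$. Thus each maximal integral curve of $X$ is a $\tilde g$-geodesic parametrized by its flow parameter. Geodesic completeness makes these geodesics defined for all parameter values, and a connectedness argument (the set of parameters at which the geodesic continuation still solves $c'=X\circ c$ is nonempty, open and closed) shows the maximal integral curves of $X$ run over all of $\mathbb{R}$. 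Hence $X$ is complete. Note this half uses neither compactness nor global hyperbolicity.

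For the converse assume $X$ is complete. By Proposition \ref{first} its flow identifies $M$ with $\mathbb{R}\times Q$, where the leaf space $Q$ is a smooth $n$-manifold homeomorphic to a Cauchy hypersurface, hence \emph{compact}, and $\pi:(M,\tilde g)\to(Q,h)$ is a semi-Riemannian submersion for a unique Riemannian $h$. Let $\gamma:(s_-,s_+)\to M$ be an inextendible $\tilde g$-geodesic. Because $X$ is Killing, $E:=\tilde g(X,\gamma')$ and $\epsilon:=\tilde g(\gamma',\gamma')$ are constant, so decomposing $\gamma'=-EX+v$ with $v$ horizontal gives $\tilde g(v,v)=\epsilon+E^2$ constant; hence $\bar\gamma:=\pi\circ\gamma$ has constant $h$-speed $\sqrt{\epsilon+E^2}$. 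Suppose, for contradiction, $s_+<\infty$. Then $\bar\gamma$ has finite $h$-length on $[s_0,s_+)$ and, as $(Q,h)$ is complete (compact), stays in a compact $K_0\subset Q$. For the flow parameter $t$ (with $X=\partial_t$) we have $t'=dt(\gamma')=-E+dt(v)$; since $\tilde g$, the horizontal distribution and $dt$ are all invariant under the flow of $X$, the supremum of $dt$ over $\tilde g$-unit horizontal vectors descends to the compact $Q$ and is bounded, so $|dt(v)|\le C\sqrt{\epsilon+E^2}$ and $t'$ is bounded. Thus $t$ stays bounded on the finite interval $[s_0,s_+)$ and $\gamma([s_0,s_+))$ lies in a compact $K\subset M$. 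Finally $\gamma'=-EX\circ\gamma+v$, with $X\circ\gamma$ bounded and $v$ in the compact set of horizontal vectors over $K$ of fixed $\tilde g$-norm, so $\gamma'$ has precompact image in $TM$; the standard extendibility criterion then contradicts inextendibility. The symmetric argument rules out $s_->-\infty$, so $\tilde g$ is geodesically complete.

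The main obstacle is exactly this converse: taming the ``vertical'' motion of $\gamma$. Constancy of $E$ and $\epsilon$ controls the horizontal component for free through the compact base $Q$, but bounding the time coordinate $t$ requires the flow-invariance of $dt$ on horizontal vectors combined with compactness of $Q$ — and this is the only place where the compact-Cauchy-hypersurface hypothesis is genuinely used.
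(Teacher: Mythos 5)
Your proposal is correct, but in the substantive direction it takes a genuinely different route from the paper, whose proof is essentially by citation. For the easy implication the paper simply invokes Prop.~30, Ch.~9, p.~254 of \cite{oneill} (a Killing field on a geodesically complete semi-Riemannian manifold is complete); you instead reprove the special case at hand by noting that a \emph{unit} Killing field satisfies $\nabla_X X=0$, so its maximal integral curves are geodesics --- a valid shortcut that works precisely because $\tilde{g}(X,X)\equiv -1$, and which correctly shows neither compactness nor global hyperbolicity is needed there. For the converse, the paper recognizes $\tilde{g}=-\theta\otimes\theta+\pi^{\ast}h$ as the bundle-like stationary metric of Example~2.4 of \cite{RS} (with $f\equiv 1$), viewing $\theta$ as a principal connection form on the trivial $\mathbb{R}$-bundle $\pi:M\to Q$, and quotes Proposition~2.1 of that reference to conclude completeness; you give a self-contained ODE argument instead: the Killing field yields the conserved quantities $E=\tilde{g}(X,\gamma')$ and $\epsilon=\tilde{g}(\gamma',\gamma')$, the projected curve has constant $h$-speed $\sqrt{\epsilon+E^2}$ on the compact base, flow-invariance of $dt$ and of the horizontal distribution plus compactness of $Q$ bound the vertical velocity $t'=-E+dt(v)$, and the escape lemma for the geodesic spray rules out a finite maximal parameter. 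All the steps check out (the decomposition $\gamma'=-EX+v$, the descent of $\sup|dt|$ on unit horizontal vectors to a continuous function on $Q$, the equivariance of the trivialization from Proposition~\ref{first}(i) that makes $dt$ flow-invariant), so in effect you have reproved the relevant special case of the Romero--S\'anchez result. What each approach buys: the paper's is shorter and embeds the statement in a general completeness theory covering non-constant warpings; yours makes transparent exactly what is needed --- in fact slightly less than compactness, namely completeness of $(Q,h)$ together with a bound on $dt$ over unit horizontal vectors over the relevant compact set. One small correction to your closing remark: compactness of $Q$ is used twice in your argument, also to guarantee $(Q,h)$ is complete, not only in the $dt$ bound.
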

\begin{proof}
We know that $X$ is a timelike Killing vector field for $(M,\tilde{g})$. If the latter is geodesically complete, then $X$ is complete by Prop. 30, Ch.9, p. 254 of \cite{oneill}.

Conversely, assume that $X$ is complete. Then, the flow $\phi:\mathbb{R}\times M \rightarrow M$ defines (cf. the proof and notation of Proposition \ref{first} (i)) is a free proper $\mathbb{R}$-action on $M$ which gives $\pi:M \rightarrow Q$ the structure of a principal $\mathbb{R}$-bundle. Since $X$ is unit Killing for $\tilde{g}$, we have, by Proposition \ref{first}(ii), that it has the form
\[
\tilde{g} = g = -\theta\otimes \theta + \pi^{\ast} h,
\]
but now $\theta (=\theta_X)$ can clearly be viewed as a connection $1$-form over the bundle $\pi:M \rightarrow Q$, and hence this spacetime is precisely of the form considered in Example 2.4 in \cite{RS} (with $f\equiv 1$). Therefore it is geodesically complete by Proposition 2.1 in that reference.
%Conversely, assume that $X$ is complete. From Prop. \ref{firstt} the spacetime is precisely of the form considered in Example 2.4 in \cite{RS} (with $f\equiv 1$). Therefore it is geodesically complete by Proposition 2.1 in that reference.
\end{proof}

%the flow $\phi:\mathbb{R}\times M \rightarrow M$ defines (cf. the proof and notation of Proposition \ref{first} (i)) is a free proper $\mathbb{R}$-action on $M$ which gives $\pi:M \rightarrow Q$ the structure of a principal $\mathbb{R}$-bundle. Since $X$ is unit Killing for $\tilde{g}$, we have, by Proposition \ref{first}(ii), that it has the form
%\[
%\tilde{g} = g = -\theta\otimes \theta + \pi^{\ast} h,
%\]
%but now $\theta (=\theta_X)$ can clearly be viewed as a connection $1$-form over the bundle $\pi:M \rightarrow Q$, and hence this spacetime is precisely of the form considered in Example 2.4 in \cite{RS} (with $f\equiv 1$). Therefore it is geodesically complete by Proposition 2.1 in that reference.
%\end{proof}

\section{Proof of Theorem \ref{mainthm1}}\label{S2}

%...Supongo que esta prueba se suprime, no?...............
%
%Let $\alpha$ be any inextendible timelike curve and let $P = I^-(\alpha)$. We wish to show that $P=M$. Assume, by way of contradiction, that this is not the case, so that the boundary $\partial P$ is non-empty. $\partial P$ is an achronal boundary, and hence achronal and edgeless, and thus compact by Proposition \ref{jsmain1}.
%
%Now, $B:= \partial P \setminus \overline{Im \alpha}$, if non-empty, is a union of future-directed null geodesics (see, e.g., Thm. 8.1.6 in \cite{wald}) which either are future-inextendible or else have endpoints on $\overline{Im \alpha}$.
%However, since $(M,g)$ is in particular strongly causal, $\alpha$ has closed image (see, e.g.. exercise 11, p. 438 of \cite{oneill}), and therefore $\overline{Im \alpha} \equiv Im \alpha$, whence we conclude that $\partial P \cap \overline{Im \alpha} \equiv \emptyset$ since $Im\alpha \subset P$. Thus, $B=\partial P$. If a null geodesic generator of $\partial P$ had an endpoint on $Im \alpha$, we would have that $\partial P \cap I^{-}(\alpha) \neq \emptyset$, which is impossible. Thus, $\partial P$ is a union of null geodesics which are all future-inextendible, and these cannot be contained in a compact set (again by strong causality), which gives the desired contradiction. Hence, $I^{-}(\alpha) = M$.
%
%By an entirely analogous argument, we establish that $I^{+}(\alpha) = M$, so the proof is complete.
%
%
%\qcd

In view of Proposition \ref{firstt}, and up to a conformal factor (which does not affect the purely causal arguments needed here) we can suppose
\[
M={\mathbb R}\times M_0,\quad g=-dt^2+2\omega(\cdot)dt+g_0,
\]
with $M_0$ compact. Let $\alpha$ be any inextendible timelike curve and let $P = I^-(\alpha)$. We wish to show that $P=M$. Pick an arbitrary point $(t_*,x_*)\in M$. There is no loss of generality in assuming that the domain of $\alpha$ is $[0,+\infty)$ and we do so for the rest of the argument; we wish to show that $(t_*,x_*)\ll \alpha(s_*)=(t(s_*),x(s_*))$ for a large enough $s_* \in [0, +\infty)$. Consider the curve $\gamma(s)=(\tau(s),y(s))$ defined on the interval $[0,1]$, where $\tau(s):=(t(s_*)-t_*)s+t_*$, and $y(s)$ is chosen to be any minimizing $g_0$-geodesic with $y(0)=x_*$, $y(1)=x(s_*)$. Clearly, $\gamma(0)=(t_*,x_*)$ and $\gamma(1)=(t(s_*),x(s_*))$. It suffices to show that $\gamma$ is timelike for $s_*$ big enough. Since $M_0$ is compact, and $\omega$, $g_0$ are independent of the $t$-coordinate, we have, for large enough $s_*$,
\[
\dot{\tau}(s)=t(s_*)-t_*>\omega(\dot{y}(s))+\sqrt{\omega(\dot{y}(s))^2+g_0(\dot{y}(s),\dot{y}(s))}\quad \hbox{for all $s\in [0,1]$.}
\]
Hence, $g(\dot{\gamma}(s),\dot{\gamma}(s))=-\dot{\tau}(s)^2+2\omega(\dot{y}(s))\dot{\tau}(s)+g_0(\dot{y}(s),\dot{y}(s))<0$ for all $s\in [0,1]$, as required. \qcd

{\em Proof of Corollary \ref{maincor1}.}
As mentioned in the Introduction, the implication $(ii) \Longrightarrow (iii)$ follows immediately from Lemma 1 of \cite{GH}, so the chain of implications
\[
(i) \Longrightarrow (ii) \Longrightarrow (iii) \Longrightarrow (iv)
\]
is clear. The final implication $(iv) \Longrightarrow (i)$ follows from Theorem \ref{mainthm1} together with Galloway's result \cite{G1} mentioned in the Introduction.
\qcd

\section{Proof of Theorem \ref{mainthm2}}\label{S3}

To fix ideas, we may always assume that $X$ is future-directed. Let $\phi: U\subset \mathbb{R}\times M \rightarrow M$ denote the global flow of $X$, and fix a smooth spacelike Cauchy hypersuface $S$ in $(M,g)$. We know that $U$ contains $\{0\}\times M$, is open in the product topology in $\mathbb{R}\times M$, and it equals the latter set iff $X$ is complete. Let $U_S:= (\mathbb{R}\times S)\cap U$ and onsider the smooth map $\phi_S:= \phi|_{U_S}$. The achronality of $S$ implies that $\phi_S$ is one-to-one, and hence a smooth homeomorphism onto an open set of $M$ by Invariance of Domain. Since $X$ is in particular non-zero everywhere, $\phi_S$ is actually a local diffeomorphism (by the Inverse Function Theorem) and hence a diffeomorphism onto its image. Finally, the fact that $S$ is Cauchy implies that $\phi_S$ is onto. We conclude that the map $\phi_S: U_S \rightarrow M$ is a global diffeomorphism. In particular, $U_S$ is an open set in $\mathbb{R}\times S$ diffeomorphic to $\mathbb{R}\times S$ itself.

We denote generic points of $U_S$ by $(t,x)$, with $t$ viewed as a time coordinate and $x \in S$. We also denote by $\partial_t$ the lift, by the projection $\pi_1:(t,x) \in \mathbb{R}\times S \mapsto t \in \mathbb{R}$, of the standard vector field $d/dt$ on $\mathbb{R}$, and $dt := d\pi_1$, so that
\begin{equation}
\label{dete}
dt(\partial_t) \equiv 1.
\end{equation}
We also keep the notation $\partial_t$ for the restriction of this vector field to $U_S$ if there is no risk of confusion. Accordingly, on $U_S$ we have by construction
\begin{equation}
\label{eq1}
(\phi_S)_{\ast}(\partial_t) = X.
\end{equation}

As mentioned in section \ref{S1}, $X$ is a unit Killing vector field with respect to the metric $\tilde{g}$ given in (\ref{starmetric}). We shall analyze the globally hyperbolic spacetime $(M, \tilde{g})$ more closely.

Using the concepts and notation of Proposition \ref{first}, we have a semi-Riemannian submersion $\pi:(M,\tilde{g}) \rightarrow (Q,h)$, for some (uniquely given) Riemannian metric $h$ on the quotient space $Q=Q_X$, and (cf. Eq. (\ref{stationarymetric}))
\[
\tilde{g} = -\theta \otimes \theta + \pi^{\ast}h,
\]
where
\[
\theta = - \tilde{g}(X, \, . \, ) \equiv - g(X/\beta, \, . \,),
\]
with $\beta = \sqrt{-g(X,X)}$. Consider the pullback metric
\begin{equation}
\label{pullback1}
g_S := \phi _S^{\ast} \tilde{g} \equiv = -(\phi _S^{\ast}\theta)\otimes (\phi _S^{\ast}\theta) + (\pi\circ \phi_S)^{\ast}(h)
\end{equation}
on $U_S$. Then $\phi_S: (U_S,g_S) \rightarrow (M,\tilde{g})$ becomes an isometry by construction. In particular, note that (\ref{eq1}) implies that $\partial_t$ is a unit Killing vector field in $(U_S,g_S)$. Moreover, since isometries take (spacelike) Cauchy hypersurfaces onto (spacelike) Cauchy hypersurfaces, $(U_S,g_S)$ is globally hyperbolic and the hypersurfaces of the form
\[
\{t\}\times S
\]
which are contained in $U_S$ are then spacelike Cauchy hypersurfaces therein.

We wish to show now that $g_S$ will have the general form (\ref{standardstationary}) of a standard stationary metric. In order to do that, we define on $U_S$ the 1-form $\omega \in \Omega^1(U_S)$ given by
\[
\omega := dt - \phi_S^{\ast} \theta.
\]
Now, using (\ref{eq1}) and the definition of $\theta$, we conclude
\[
(\phi_S^{\ast}\theta)(\partial_t) = \theta((\phi_S)_{\ast}(\partial_t)) = \theta(X) \equiv 1.
\]
Therefore,
\[
\omega(\partial_t) \equiv 0,
\]
and clearly, ${\cal L}_X \theta =0$, so that
\[
{\cal L}_{\partial_t}\omega =0.
\]
We conclude that there exists a unique 1-form $\omega_0 \in \Omega^1(S)$ for which
\begin{equation}
\label{eq2}
\omega = \pi_2^{\ast}\omega_0,
\end{equation}
where $\pi_2: \mathbb{R}\times S \rightarrow S$ is the canonical projection onto the second factor.

To proceed, consider the mapping $m:S \rightarrow Q$ given by
\[
m: x \in S \mapsto \pi \circ \phi_S(0,x) \in Q.
\]
It not hard to check $m$ is a smooth diffeomorphism. Use it to define a Riemannian metric $h_0$ on $S$ by the pullback:
\[
h_0:= m^{\ast} h.
\]
Finally, note that
\[
m \circ \pi _2(t,x) = \pi (\phi_S(0,x)) \equiv \pi(\phi_S(t,x)), \forall (t,x) \in U_S,
\]
i.e. $m \circ \pi _2 |_{U_S}\equiv \pi \circ \phi_S$, whence we conclude that
\begin{equation}
\label{eq3}
(\pi \circ \phi_S)^{\ast}h = \pi _2^{\ast} h_0.
\end{equation}
Substituting (\ref{eq2}) and (\ref{eq3}) in (\ref{pullback1}) and rearranging, we get
\begin{equation}
\label{eq4}
g_S = -dt\otimes dt - \pi_2^{\ast} \omega_0 \otimes dt - dt \otimes \pi_2^{\ast} \omega_0 + \pi_2^{\ast}g_0,
\end{equation}
where we have defined
\begin{equation}
\label{eq5}
g_0:= h_0 - \omega_0 \otimes \omega_0.
\end{equation}
Note that the smooth $(0,2)$-tensor $g_0$ is actually the induced metric on each $t=const.$ hypersuface in $U_S$, and hence positive-definite. (\ref{eq4}) is the desired standard stationary form.

The next step is now clear: we {\em define} $(\hat{M}, \hat{g})$ as the standard stationary spacetime (cf. Example \ref{ex1}) associated with the data $(S,g_0,\beta_0 \equiv 1, \omega_0)$, which has precisely the form (\ref{eq4}), and a complete unit Killing vector field $\hat{X} \equiv \partial_t$ (cf. item (1) in Example \ref{ex1}). Clearly, this spacetime is an isometric (trivial) extension of $(U_S,g_S)$, and the map
\[
\varphi: \phi_S ^{-1}
\]
gives the desired conformal embedding.

To complete the proof, assume that $S$ is compact. We wish to show that $\varphi(S) \equiv \{0\} \times S$ is a Cauchy hypersurface for $(\hat{M}, \hat{g})$. Let $\alpha: \lambda \in (a,b) \subset \mathbb{R} \mapsto (t(\lambda),x(\lambda)) \in \mathbb{R}\times S$ be any future-directed causal inextendible curve in $(\hat{M}, \hat{g})$ ($-\infty \leq a< b \leq +\infty$). In particular, the fact that this curve is future-directed implies that we have $\dot{t}(\lambda) >0$, where the dot indicates derivative with respect to the curve parameter. Fix any $\lambda_0 \in (a,b)$, and suppose $\alpha$ does not intersect $\varphi(S)$. In this case, we may assume, say, that $t(\lambda) <0$ for all $\lambda \in I$, since the case if it always $>0$ is entirely analogous. But then (since the $t$-coordinate increases) the future-inextendible curve $\alpha |_{[\lambda _0, b)}$ stays imprisoned in the compact set $[t(\lambda_0),0]\times S$, which contradicts the strong causality of  $(\hat{M}, \hat{g})$ (cf. (2) in Example \ref{ex1}).  We conclude that $\alpha$ does intersect $\varphi(S)$, and since $\alpha$ is arbitrary, and $\varphi(S)$ is acausal (again by (2) in Example \ref{ex1}), it is indeed a Cauchy hypersurface as claimed.
\qcd

\section{Proof of Theorem \ref{mainthm4}}\label{S4}

%As remarked in the Introduction, the Lorentzian splitting theorem ensures that Bartnik's conjecture is true provided one can show, in the presence of its hypotheses, the existence of a {\em geodesic timelike line}. On the other hand, it is easy to show that if $(M,g)$ is globally hyperbolic with compact Cauchy hypersurfaces, then {\em either} a null {\em or} a timelike geodesic line {\em does} exist.

Our goal in this section is to understand in a more detailed fashion what can be said about the structure of the spacetime in the context of the Bartnik conjecture when one has an {\em incomplete}  conformal Killing vector field defined thereon.

%It is therefore of interest to explore the consequences for such a spacetime of the existence of a geodesic {\em null} line.

%This problem has been extensively explored by G. Galloway and collaborators \cite{1,2,3}, culminated by the so-called {\em null splitting theorem} in \cite{GNull}. This has been applied in D. Solis thesis \cite{didier} to obtain rigidity results for spacetimes with a null line. In this section, we adapt some of the results in \cite{didier} to obtain new rigidity results for spacetimes with a null geodesic line.

We again assume, throughout this section, that $(M,g)$ is globally hyperbolic with compact Cauchy hypersurfaces and admits a timelike conformal Killing vector field $X \in \mathfrak{X}(M)$. No a priori assumption on the Ricci tensor is made.  We shall make extensive use of the conformally extended spacetime $(\hat{M},\hat{g})$ wherein $X$ extends to a complete unit timelike Killing vector field $\hat{X}$, as described in Theorem \ref{mainthm2}. We denote the boundary of $M$ when viewed as an open subset of $\hat{M}$ by $\partial _{\hat{M}}M \equiv \partial M$.

The proof of Theorem \ref{mainthm2} makes it clear that a point $p \in \hat{M}$ is on $\partial M$ if and only if it is an endpoint in $\hat{M}$ of an {\em incomplete} maximal integral curve of $X$. In particular, $\partial M =\emptyset$ if and only if $X$ is itself complete in $M$. We start our discussion by examining more closely some facts about $M$ and $\partial M$.

\begin{lemma}
\label{lemma1}
$M \subset \hat{M}$ is {\em causally convex}, i.e., any causal curve segment in $(\hat{M},\hat{g})$ with endpoints in $M$ is contained in $M$.
\end{lemma}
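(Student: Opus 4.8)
The plan is to transport the whole question to the explicit model furnished by Theorem \ref{mainthm2}. Since causal relations are conformally invariant and $\varphi$ is a conformal embedding, it suffices to prove causal convexity of $\varphi(M)=U_S$ inside the \emph{standard stationary} spacetime $(\hat M,\hat g)=(\mathbb R\times S,\hat g)$, where $\hat g$ has the form \eqref{eq4} (with $\beta_0\equiv 1$). First I would record the features of this model that do the work: $t=\pi_1$ is a temporal function (item (2) of Example \ref{ex1}), so $t$ is \emph{strictly increasing} along every future-directed causal curve; the integral curves of $\hat X=\partial_t$ are exactly the fibres $\{x\}\times\mathbb R$; and $\Sigma:=\{0\}\times S=\varphi(S)$ is a compact acausal Cauchy hypersurface both for $U_S$ and for $\hat M$ (the latter by the last part of the proof of Theorem \ref{mainthm2}). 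Because each fibre meets $U_S$ in the connected open interval over which the flow of $X$ from $S$ is defined, and because $\Sigma\subset U_S$, the region has product-interval form $U_S=\{(t,x):t^-(x)<t<t^+(x)\}$ with $t^-(x)<0<t^+(x)$, where $t^+:S\to(0,+\infty]$ is lower semicontinuous and $t^-:S\to[-\infty,0)$ is upper semicontinuous. By the characterization of $\partial M$ recalled above, $\partial M=\Gamma^+\cup\Gamma^-$, where $\Gamma^\pm$ are the graphs of the finite values of $t^\pm$, i.e.\ the sets of future/past endpoints of the incomplete integral curves of $X$.

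The heart of the argument is to reduce causal convexity to one geometric statement: \emph{the boundary graphs $\Gamma^+$ and $\Gamma^-$ are achronal in $(\hat M,\hat g)$}. Granting this, I would finish as follows. Let $\gamma$ be a future-directed causal curve in $\hat M$ with endpoints $p=(t_p,x_p)$ and $q=(t_q,x_q)$ in $U_S$; parametrizing by $t$ (legitimate since $t$ is strictly increasing), write $\gamma(t)=(t,x(t))$ for $t\in[t_p,t_q]$ and set $h(t):=t^+(x(t))-t$, so $h(t_p)>0$ and $h(t_q)>0$. If $h$ vanished somewhere, let $t_1$ be the first such time; then $(t_1,x(t_1))\in\Gamma^+$. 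Achronality of $\Gamma^+$ is equivalent to the Lipschitz-type bound $t^+(x')-t^+(x'')\le \mathfrak d(x'',x')$, where $\mathfrak d$ is the stationary arrival-time (Randers) function of $\hat g$; since the sub-arc of $\gamma$ from $t_1$ to $t$ is causal we also have $\mathfrak d(x(t_1),x(t))\le t-t_1$, and combining these yields $h(t)\le h(t_1)=0$ for all $t\ge t_1$, contradicting $h(t_q)>0$. The same argument with $\Gamma^-$ controls the lower boundary. Equivalently, and perhaps more transparently, achronality of $\Gamma^\pm$ says precisely that $M^+:=\{t<t^+(x)\}$ is a past set and $M^-:=\{t>t^-(x)\}$ is a future set (if $r'\notin M^+$ while $r'\ll r\in M^+$, one chains $a\le r'\ll r\ll b$ with $a,b\in\Gamma^+$ to get $a\ll b$, contradicting achronality); and the intersection $U_S=M^+\cap M^-$ of a past set with a future set is automatically causally convex, because $p\le r\le q$ with $q$ in a past set forces $r$ into that past set, and dually for the future set.

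The hard part will therefore be establishing achronality of $\Gamma^\pm$, and this is exactly where global hyperbolicity of $M$ (equivalently of $(U_S,\hat g)$) must enter in an essential way — note the claim is false without it, since one can write down regions $\{t^-<t<t^+\}$ with \emph{timelike} boundary graphs through which causal curves freely exit and re-enter. I would argue by contradiction: if, say, $a,b\in\Gamma^+$ satisfy $a\ll b$, then the two incomplete integral curves of $X$ with future endpoints $a,b$ supply a causal direction that ``rides just below'' the boundary while $t$ stays in a bounded range; using the compactness of $S$, I would extract from this a future-inextendible causal curve lying \emph{inside} $U_S$ yet imprisoned in a compact subset of $\hat M$, contradicting the strong causality of the globally hyperbolic $U_S$. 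This is the same imprisonment/strong-causality mechanism already exploited in the proofs of Proposition \ref{jsmain1} and Theorem \ref{mainthm2}. The delicate point — the one I expect to cost the most — is converting the failure of achronality into an honest future-inextendible \emph{and} imprisoned curve contained in $U_S$: the naive timelike curve realizing $a\ll b$ lives in $\hat M$ and may wander out of $U_S$, so one must instead build the curve intrinsically and control its excursion near $\partial M$ using the semicontinuity of $t^\pm$ together with the finite propagation speed coming from the compactness of $S$.
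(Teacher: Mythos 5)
Your proposal is not a complete proof: its entire weight rests on the achronality of the boundary graphs $\Gamma^{\pm}$, and that step is never established --- you explicitly defer it as ``the hard part.'' Worse, the plan inverts the paper's logical order in a way that makes the deferred step genuinely problematic. In the paper, the achronality of $\partial_{\pm}M$ is Proposition \ref{lemma3}, and its proof \emph{uses} Lemma \ref{lemma1}: from $p \ll q$ with $p,q \in \partial_{+}M$ one juxtaposes causal curves to produce a causal curve with endpoints in $M$ that leaves $M$, contradicting causal convexity. So to prove Lemma \ref{lemma1} via achronality of $\Gamma^{\pm}$ you need a new, independent argument for the latter, and the imprisonment/strong-causality sketch you offer does not supply one: as you yourself note, the timelike curve realizing $a \ll b$ lives in $\hat{M}$ and may leave $U_S$, and no construction of a future-inextendible imprisoned curve \emph{inside} $U_S$ is actually given. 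There is also a secondary gap in the reduction itself: your chain $a \le r' \ll r \ll b$ showing that $M^{+}=\{t<t^{+}(x)\}$ is a past set requires a point $b \in \Gamma^{+}$ on the fiber over $x(r)$, i.e.\ it silently assumes $t^{+}$ is finite on \emph{every} fiber once $\Gamma^{+}\neq\emptyset$. In the paper that finiteness is precisely the statement that $\partial_{+}M$, being achronal and edgeless, is a Cauchy hypersurface of $(\hat{M},\hat{g})$ (Proposition \ref{lemma3} via Proposition \ref{jsmain1}) --- again downstream of Lemma \ref{lemma1}.

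For comparison, the paper's proof is short and uses only the one input you also invoke, namely that a smooth compact spacelike Cauchy hypersurface $S$ of $(M,g)$ remains an acausal Cauchy hypersurface of $(\hat{M},\hat{g})$ (Theorem \ref{mainthm2}). Given a future-directed causal $\alpha:[0,1]\to\hat{M}$ with endpoints in $M$ but leaving $M$, choose $S$ through $\alpha(1)$; acausality of $S$ in $\hat{M}$ rules out $\alpha(0)\in I^{+}(S,M)$ (it would chronologically connect two points of $S$), so $\alpha(0)\in I^{-}(S,M)$ and $\alpha$ never meets $S$ before $t=1$. Setting $s_0:=\sup\{t:\alpha[0,t]\subset M\}<1$, the restriction $\alpha|_{[0,s_0)}$ is a future-inextendible causal curve in $(M,g)$ missing $S$, contradicting that $S$ is Cauchy in $M$. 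No structure of $\partial M$, no Randers/Fermat distance, and no achronality statement is needed; indeed those facts about $\partial_{\pm}M$ are \emph{consequences} of the lemma. You could repair your write-up by discarding the $\Gamma^{\pm}$ detour and substituting this two-step argument; as it stands, the proposal is a program rather than a proof.
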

\begin{proof}
Let $\alpha:[0,1] \rightarrow \hat{M}$ be a future-directed causal curve segment in $(\hat{M},\hat{g})$ with $\alpha(0),\alpha(1) \in M$, and $\alpha(t_0) \notin M$ for some $0<t_0<1$. Let $S$ be a smooth spacelike (hence acausal) Cauchy hypersurface of $(M,g)$ with $\alpha(1) \in S$. Since $S$ is also an acausal Cauchy hypersurface of $(\hat{M},\hat{g})$, $\alpha(t) \notin S$ for any $t \in [0,1)$. In particular, $\alpha(0) \in I^+(S,M)\cup I^-(S,M)$. However, if $\alpha(0) \in I^+(S,M) \subset I^+(S,\hat{M})$, then for some $x \in S$, we would have
\[
x \ll_{(\hat{M},\hat{g})} \alpha(0) \leq _{(\hat{M},\hat{g})}\alpha(1) \Longrightarrow x \ll_{(\hat{M},\hat{g})}\alpha(1),
\]
which violates the achronality of $S$ in $(\hat{M},\hat{g})$. We conclude that $\alpha(0) \in I^-(S,M)$.

Now, let
\[
s_0 := \sup \{ t \in [0,1] \, : \, \alpha[0,t] \subset M\}.
\]
Then, $s_0 \leq t_0 <1$ and hence $\alpha|_{[0,s_0)}$ is a future-inextendible causal curve in $(M,g)$ starting at $\alpha(0)$ which does not intersect $S$, an absurd since $S$ is Cauchy.
\end{proof}

\begin{lemma}
\label{lemma2}
If $S,S'\subset M$ are two smooth spacelike Cauchy hypersurfaces in $(M,g)$, then
\begin{equation}
\label{eqfor2}
I^\pm(S,\hat{M})\cap \partial M = I^\pm(S',\hat{M})\cap \partial M.
\end{equation}
\end{lemma}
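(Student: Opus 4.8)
The plan is to show that $I^+(S,\hat M)\cap\partial M$ admits a description making no reference to the Cauchy hypersurface $S$ at all, so that the asserted equality becomes immediate. Since $\hat X$ is complete, let $\hat\phi$ denote its global flow on $\hat M$; the orbits of $\hat\phi$ are inextendible timelike curves, and by the characterization of $\partial M$ recalled just before this lemma, every $p\in\partial M$ is an endpoint in $\hat M$ of an incomplete maximal integral curve of $X$, i.e. of the portion of the $\hat\phi$-orbit through $p$ lying in $M$. I will classify each such $p$ as a \emph{future} or \emph{past} boundary point according to whether $M$ is entered from $p$ in the past or the future flow direction, and then prove that $I^+(S,\hat M)\cap\partial M$ equals \emph{exactly} the set of future boundary points, independently of $S$.

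First I would make the classification precise. For $p\in\partial M$ set $J_p:=\{t\in\mathbb{R}:\hat\phi_t(p)\in M\}$. Causal convexity of $M$ in $\hat M$ (Lemma \ref{lemma1}) forces $J_p$ to be an interval: if $\hat\phi_{t_1}(p),\hat\phi_{t_2}(p)\in M$ with $t_1<t_2$, the orbit segment joining them is a causal curve with endpoints in $M$, hence contained in $M$. Since $p\notin M$ we have $0\notin J_p$, while $p$ being an endpoint of the incomplete integral curve forces $0$ to be a boundary point of $J_p$; thus $J_p$ is a nonempty open interval having $0$ as one endpoint, and either $J_p\subset(-\infty,0)$ (I call $p$ a \emph{future} boundary point, $p\in\partial^+M$) or $J_p\subset(0,\infty)$ (a \emph{past} boundary point, $p\in\partial^-M$). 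This dichotomy is manifestly intrinsic to $(M,X)$, and $\partial M=\partial^+M\,\dot{\cup}\,\partial^-M$.

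Next I would establish the two inclusions $\partial^+M\subseteq I^+(S,\hat M)\cap\partial M$ and $I^+(S,\hat M)\cap\partial M\subseteq\partial^+M$. For the first, note that for $p\in\partial^+M$ the set $\{\hat\phi_t(p):t\in J_p\}$ is precisely the maximal integral curve of $X$ through its points, hence an \emph{inextendible timelike curve in} $M$; as $S$ is a Cauchy hypersurface of $(M,g)$ it meets this curve exactly once, say at $\hat\phi_s(p)\in S$ with $s\in J_p$, so $s<0$. The orbit segment from $\hat\phi_s(p)$ to $p=\hat\phi_0(p)$ is a future-directed timelike curve of $\hat M$, whence $\hat\phi_s(p)\ll p$ and $p\in I^+(S,\hat M)$. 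For the reverse inclusion, suppose $p\in I^+(S,\hat M)\cap\partial M$ were a past boundary point, $p\in\partial^-M$. Then some $q\in S\subset M$ satisfies $q\ll_{(\hat M,\hat g)}p$, while $p\in\partial^-M$ gives $\hat\phi_\delta(p)\in M$ for small $\delta>0$, so $p\ll_{(\hat M,\hat g)}\hat\phi_\delta(p)=:r\in M$. Concatenating produces a future causal curve from $q\in M$ through $p$ to $r\in M$; by causal convexity (Lemma \ref{lemma1}) this curve lies in $M$, forcing $p\in M$ and contradicting $p\in\partial M$. Hence $I^+(S,\hat M)\cap\partial M=\partial^+M$.

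Since $\partial^+M$ (and symmetrically $\partial^-M$, giving $I^-(S,\hat M)\cap\partial M=\partial^-M$) is defined purely through the flow of $\hat X$ and the region $M$, the equality $I^\pm(S,\hat M)\cap\partial M=I^\pm(S',\hat M)\cap\partial M$ follows at once for any pair $S,S'$ of Cauchy hypersurfaces. I expect the only genuine subtlety to be the reverse inclusion: ruling out that a past boundary point could lie in $I^+(S,\hat M)$ is exactly where causal convexity is indispensable, and one must be careful that the chronology relations there are computed in $(\hat M,\hat g)$ while the once-crossing property is invoked for $S$ as a Cauchy hypersurface of $(M,g)$. No assumption on the Ricci tensor is used, and compactness of the Cauchy hypersurfaces enters only insofar as it is already part of the standing hypotheses of this section (it guarantees, via Theorem \ref{mainthm2}, the ambient extension $(\hat M,\hat g)$ being used).
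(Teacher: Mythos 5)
Your proof is correct, but it takes a genuinely different route from the paper's. The paper argues directly on the two sets: given a putative $p \in I^+(S,\hat{M})\cap\partial M\setminus I^+(S',\hat{M})$, it uses the fact that $S'$ is also a Cauchy hypersurface of $(\hat{M},\hat{g})$ to place $p\in J^-(S',\hat{M})$, and then juxtaposes a future-directed timelike curve from $S$ to $p$ with a future-directed causal curve from $p$ to $S'$, obtaining a causal curve with endpoints in $M$ that leaves $M$ at $p$, contradicting Lemma~\ref{lemma1}. You instead manufacture an $S$-independent description of the two pieces of $\partial M$: using causal convexity to show that each $J_p=\{t\in\mathbb{R}:\hat{\phi}_t(p)\in M\}$ is an open interval with $0$ as an endpoint, you identify $I^{\pm}(S,\hat{M})\cap\partial M$ with the sets of future/past endpoints of incomplete orbits, after which the lemma is immediate. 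Your route buys two things: first, you only need $S$ to be Cauchy in $(M,g)$ (once-crossing of the inextendible-in-$M$ orbit), and never that an \emph{arbitrary} smooth spacelike Cauchy hypersurface of $(M,g)$ remains Cauchy in $(\hat{M},\hat{g})$ --- a fact the paper's proof uses but which Theorem~\ref{mainthm2} guarantees only for the particular $S$ used in the construction (for general $S'$ one must invoke that compact spacelike hypersurfaces of globally hyperbolic spacetimes are Cauchy); second, your flow-theoretic characterization of $\partial_{\pm}M$ is strictly stronger than the lemma and anticipates structure the paper only extracts later (compare Propositions~\ref{lemma3} and~\ref{line1}). The price is that you lean essentially on the characterization of $\partial M$ as the set of endpoints of incomplete maximal integral curves of $X$ --- both to get $J_p\neq\emptyset$ and to get $0$ in the closure of $J_p$ --- which the paper asserts just before the lemma without detailed proof, whereas the paper's own argument for this lemma does not need it. Two small points worth making explicit if you write this up: the restriction of the orbit to $J_p$ is inextendible as a timelike curve of $(M,g)$ by the standard ODE escape lemma (an endpoint in $M$ would permit a flow-box extension, contradicting maximality), and the concatenated curve in your reverse inclusion is only piecewise smooth, which Lemma~\ref{lemma1} of course tolerates.
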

\begin{proof}
We only need to show that
\[
I^+(S,\hat{M})\cap \partial M \subset I^+(S',\hat{M})\cap \partial M,
\]
since then the opposite inclusion is immediate by exchanging the roles of $S$ and $S'$, and the past case proceeds by time-dual arguments.

But note that $S'$ being also Cauchy in $(\hat{M}, \hat{g})$ means that any putative $p \in I^+(S,\hat{M})\cap \partial M \setminus I^+(S',\hat{M})$ would be in $J^-(S',\hat{M})$, and hence we might juxtapose a future-directed timelike curve from $S$ to $p$ with a future-directed causal curve from $p$ to $S'$, both in $(\hat{M}, \hat{g})$. However, since the resulting composition of these two curves could be viewed as a causal curve in $(\hat{M}, \hat{g})$  with endpoints in $M$ which leaves $M$, this would contradict \ref{lemma1}, and hence the desired inclusion must hold.
\end{proof}

With Lemma \ref{lemma2} in mind, we may define, for some smooth spacelike Cauchy hypersurface $S \subset M$,
\[
\partial _{\pm} M := I^{\pm} (S,\hat{M})\cap \partial M,
\]
and it is thus guaranteed that this definition does not depend on the particular choice of $S$. Yet, the achronality of $S$ implies that $\partial _{+} M \cap \partial _{-} M = \emptyset$, while the fact that $S$ is Cauchy for $(\hat{M}, \hat{g})$ ensures that $\partial M = \partial _{+} M \cup \partial _{-} M$.

The following result summarizes the structural properties of the partial boundaries $\partial _{+} M$ and $\partial _{-} M$.
\begin{prop}
\label{lemma3}
The sets $\partial _{+} M$ and $\partial _{-} M$ are edgeless achronal sets in $(\hat{M},\hat{g})$. Indeed, if $\partial_{\pm}M$ is non-empty, it is a Cauchy hypersurface in $(\hat{M},\hat{g})$, and hence homeomorphic to any given Cauchy hypersurface of $(M,g)$. Moreover, if both $\partial _{+} M$ and $\partial _{-} M$ are both non-empty, then
\begin{equation}
\label{equality}
M = I^-(\partial _{+} M, \hat{M}) \cap I^+(\partial _{-} M, \hat{M}).
\end{equation}
In particular, $\overline{M}$ is compact in this case.
\end{prop}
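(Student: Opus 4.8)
The plan is to work throughout in the coordinates furnished by Theorem \ref{mainthm2}: identify $(\hat M,\hat g)$ with the standard stationary spacetime on $\mathbb R\times S$, so that $\hat X=\partial_t$, the function $t=\pi_1$ is temporal with compact Cauchy slices $\{t\}\times S$, and $M$ is the open set $U_S=\{(t,x)\,:\,a_x<t<b_x\}$, where $(a_x,b_x)\ni 0$ is the maximal interval of definition of the $X$-integral curve through $x\in S\equiv\{0\}\times S$. Because $M$ is causally convex (Lemma \ref{lemma1}) and each $\hat X$-orbit $\mathbb R\times\{x\}$ is timelike, $M$ meets each orbit in the connected set $(a_x,b_x)\times\{x\}$; consequently $\partial M$ consists exactly of the future endpoints $(b_x,x)$ with $b_x<+\infty$ and the past endpoints $(a_x,x)$ with $a_x>-\infty$, and since $b_x>0>a_x$ these are precisely $\partial_+M$ and $\partial_-M$ respectively.

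The linchpin of the whole argument is the following consequence of causal convexity: for every $p\in\partial_+M$ one has $I^+(p,\hat M)\cap\overline M=\emptyset$ (dually $I^-(p,\hat M)\cap\overline M=\emptyset$ for $p\in\partial_-M$). To see this, write $p=(b_{x_0},x_0)$ and suppose $q\in I^+(p)\cap\overline M$. Since $I^-(q)$ is open and $p$ lies on the orbit $\mathbb R\times\{x_0\}$, the point $w:=(b_{x_0}+\delta,x_0)$ still lies in $I^-(q)$ for small $\delta>0$, yet $w\notin M$; moreover $p_\epsilon:=(b_{x_0}-\epsilon,x_0)\in M$ with $p_\epsilon\ll w$ along the orbit, and openness of $I^+(w)$ together with $q\in\overline M$ yields a point $q''\in M\cap I^+(w)$. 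The timelike chain $p_\epsilon\ll w\ll q''$ then produces a causal curve with endpoints in $M$ that leaves $M$, contradicting Lemma \ref{lemma1}. From this key fact everything follows rather mechanically. \emph{Achronality} of $\partial_\pm M$ is immediate, since two chronologically related points of $\partial_+M$ would place the later one in $I^+(\cdot)\cap\overline M$. Assuming $\partial_+M\neq\emptyset$, the same fact forces $D_+:=\{x:b_x<+\infty\}$ to be all of $S$: if some $b_{x_1}=+\infty$, then for a fixed $p\in\partial_+M$ and $t$ large the temporal gap beats the ($S$-compact, hence bounded) spatial separation, giving $p\ll(t,x_1)\in M$, again a contradiction.

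With $D_+=S$ in force, I would next record that $b:S\to\mathbb R$ is continuous: lower semicontinuity is the openness of $M$, while upper semicontinuity is again the key fact (a sequence of boundary points converging to a point strictly above $(b_{x_\ast},x_\ast)$ would deposit a point of $I^+((b_{x_\ast},x_\ast))$ in $\overline M$). Hence $\partial_+M$ is the graph of a continuous function over the compact manifold $S$ --- a compact achronal topological hypersurface without boundary, in particular \emph{edgeless}. Now Proposition \ref{jsmain1}, applied in the chronological (indeed globally hyperbolic) spacetime $(\hat M,\hat g)$ with its complete timelike Killing field $\hat X$, applies to the non-empty compact achronal edgeless set $\partial_+M$ and yields that it is a Cauchy hypersurface of $(\hat M,\hat g)$, homeomorphic to any Cauchy hypersurface thereof, and thus to the common Cauchy hypersurface $S$ and to any Cauchy hypersurface of $(M,g)$. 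The statements for $\partial_-M$ follow by the time-dual argument.

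For the final identity, suppose both $\partial_\pm M$ are non-empty, so that $a_x,b_x$ are finite for every $x$ and both boundaries are Cauchy hypersurfaces of $\hat M$. Reading (\ref{equality}) off one orbit at a time is then immediate: since the orbit $\mathbb R\times\{x\}$ is a future-directed timelike curve crossing the Cauchy hypersurface $\partial_+M$ exactly at $(b_x,x)$ and $\partial_-M$ exactly at $(a_x,x)$, one has $I^-(\partial_+M,\hat M)\cap(\mathbb R\times\{x\})=(-\infty,b_x)\times\{x\}$ and $I^+(\partial_-M,\hat M)\cap(\mathbb R\times\{x\})=(a_x,+\infty)\times\{x\}$, whose intersection is exactly $M\cap(\mathbb R\times\{x\})$. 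Finally, $\overline M=\{(t,x):a_x\le t\le b_x\}$ is the closed region bounded by the two compact continuous graphs $x\mapsto a_x$ and $x\mapsto b_x$; it is a closed, $t$-bounded subset of $\mathbb R\times S$ with $S$ compact, hence compact. The one genuinely substantive point is the key fact $I^+(p)\cap\overline M=\emptyset$: once it is in hand, achronality, the exclusion of complete orbits, continuity of the boundary graphs, and hence the passage from ``achronal'' to ``compact edgeless hypersurface'' needed to invoke Proposition \ref{jsmain1} are all routine.
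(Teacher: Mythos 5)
Your proof is correct, but it follows a genuinely different route from the paper's. The paper never passes to coordinates: it proves achronality of $\partial_{+}M$ by juxtaposing timelike/causal curves and invoking causal convexity (Lemma \ref{lemma1}); it proves edgelessness by a direct edge-point argument inside $U=I^{+}(S,\hat{M})$ (a short timelike curve near an edge point would have to start in $M$ and end outside $M$, hence cross $\partial_{+}M$); it then invokes Proposition \ref{jsmain1} exactly as you do; and it obtains (\ref{equality}) by perturbing the endpoints of a timelike curve through a putative $p\notin M$, with compactness of $\overline{M}$ coming from global hyperbolicity of $(\hat{M},\hat{g})$ applied to the compact sets $\partial_{\pm}M$. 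You instead exploit the explicit description $M=\{(t,x): a_x<t<b_x\}\subset\mathbb{R}\times S$ from the proof of Theorem \ref{mainthm2}, distill everything into the single ``key fact'' $I^{+}(p,\hat{M})\cap\overline{M}=\emptyset$ for $p\in\partial_{+}M$ (a clean strengthening of achronality, again powered by Lemma \ref{lemma1}), and from it deduce finiteness and continuity of the endpoint functions, so that $\partial_{\pm}M$ become graphs of continuous functions over the compact $S$ --- closed achronal topological hypersurfaces, hence edgeless by the standard characterization (O'Neill, Ch.~14). Your route buys more structural information than the paper states: the explicit identities $\partial_{\pm}M=\mathrm{graph}(b),\mathrm{graph}(a)$ and $\overline{M}=\{a_x\le t\le b_x\}$, with compactness of $\overline{M}$ read off from boundedness of continuous functions rather than from global hyperbolicity; the paper's argument is more abstract and never needs continuity of $a,b$. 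Note also that your exclusion of orbits with $b_{x_1}=+\infty$ reproduces, in effect, the computation in the proof of Theorem \ref{mainthm1} applied in $(\hat{M},\hat{g})$.

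One presentational wrinkle you should repair: you assert at the outset that $\partial M$ consists \emph{exactly} of the orbit endpoints $(b_x,x)$ and $(a_x,x)$, but before continuity of $b$ is known this identification is unjustified (if $b$ failed to be upper semicontinuous, $\partial_{+}M$ could a priori contain points $(t_0,x_0)$ with $t_0>b_{x_0}$), and your proof of the key fact is written only for $p=(b_{x_0},x_0)$ --- while the key fact is precisely what you later use to prove continuity. The circularity is benign: since $I^{+}(S,\hat{M})=\{t>0\}$ and $a_{x_0}<0$, any $p=(t_0,x_0)\in\partial_{+}M$ automatically satisfies $t_0\ge b_{x_0}$, and your argument goes through verbatim with $w=(t_0+\delta,x_0)$ and $p_\epsilon=(b_{x_0}-\epsilon,x_0)$, which still satisfy $p_\epsilon\ll w$ along the orbit. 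With the order of claims adjusted accordingly (key fact for general $p\in\partial_{+}M$ first, then continuity, then the graph identification), the proof is complete.
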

\begin{proof}
To show that $\partial_{\pm}M$ is edgeless and achronal in $(\hat{M},\hat{g})$, we again need to argue only for, say, $\partial _{+} M$, the other case following from time duality.

Suppose, by way of contradiction, that there exist $p, q \in \partial _{+} M$ such that $p \ll_{(\hat{M}, \hat{g})} q$. From the definition of  $\partial _{+} M$ together with the openness of the chronological relation, we can pick $p',q' \in \hat{M}$ and $S \subset M$ smooth spacelike Cauchy hypersurface such that (i) $p' \in I^{+} (S,\hat{M})\setminus M$ and (ii) $q' \in I^{+} (p',\hat{M})\cap M$. Again by suitably juxtaposing causal curves we can obtain a causal curve in $(\hat{M}, \hat{g})$ with endpoints in $M$ but leaving $M$, in contradiction with Lemma \ref{lemma1}. We conclude that $\partial _{+} M$ is indeed achronal in $(\hat{M},\hat{g})$.

Now, we show that $edge(\partial_{+}M)\equiv \emptyset$. Suppose, again by way of contradiction, that $p \in edge(\partial_{+}M)$. Fix some smooth spacelike Cauchy hypersurface $S\subset M$ for which $p \in I^+(S,\hat{M})=:U$. Since $U$ is open in $\hat{M}$ and $p$ is an edge point, there exists a timelike curve segment $\beta:[0,1] \rightarrow U$ starting at $I^-(p,U)$ and ending at $I^+(p,U)$ which does not cross $\partial_{+}M$. Note, however, that such a curve is entirely contained in $U$. We claim that $\beta(0) \in M$ and $\beta(1) \notin M$. Note that in this case $\beta$ would intersect $\partial M \cap U \equiv \partial _+ M$, yielding the desired contradiction.

But if $\beta(0) \notin M$, since $q \ll_{(\hat{M},\hat{g})}  \beta(0) \ll_{(\hat{M},\hat{g})} p$ for some $q \in M$ and $p \in \partial M$, then $q \ll_{(\hat{M},\hat{g})}  \beta(0) \ll_{(\hat{M},\hat{g})} p'$ for some $p'\in M$, which violates Lemma \ref{lemma1}. Thus, $\beta(0) \in M$. An entirely analogous reasoning establishes that $\beta(1) \notin M$. This finishes the proof that $\partial _{+} M$ is edgeless.

Therefore, if $\partial _{\pm} M \neq \emptyset$, Proposition \ref{jsmain1} establishes it is a Cauchy hypersurface in $(\hat{M},\hat{g})$, since the Killing vector field $\hat{X}$ is complete.

Finally, suppose $\partial _{\pm} M$ are both non-empty. It has been established that these are Cauchy hypersurfaces in $(\hat{M},\hat{g})$, and the arguments in the previous paragraphs actually serve to prove that points to the future of $\partial_+M$ and to the past of $\partial_- M$, respectively, cannot be in $M$. Therefore,
\[
M \subset I^-(\partial _{+} M, \hat{M}) \cap I^+(\partial _{-} M, \hat{M}).
\]
Now, let $p \in I^-(\partial _{+} M, \hat{M}) \cap I^+(\partial _{-} M, \hat{M})$, and consider a future-directed timelike curve segment $\gamma$ of $(\hat{M},\hat{g})$ with endpoints on $\partial_+M$ and $\partial_- M$ and passing through $p$. If $p \notin M$, then ``perturbing'' the endpoints of $\gamma$ a little, one might get another timelike curve with endpoints in $M$ passing through $p$, or in other words leaving $M$, again contradicting Lemma \ref{lemma1}. So $p \in M$, and the equality (\ref{equality}) is established. Moreover, the global hyperbolicity of $(\hat{M},\hat{g})$ implies that the right hand side of (\ref{equality}) is precompact since $\partial_+M$ and $\partial_- M$ are compact, thus showing that $\overline{M}$ is compact in $\hat{M}$.
\end{proof}

\begin{prop}
\label{line1}
Any future-directed null geodesic line in $(M,g)$ has a future endpoint on $\partial _{+} M$ and a past endpoint on $\partial _{-} M$. In particular, if such a null line exists, then $\partial_+M$ and $\partial_- M$ are both non-empty, $\overline{M}$ is compact in $\hat{M}$, and {\em every} integral curve of $X$ is incomplete both to the past and to the future.
\end{prop}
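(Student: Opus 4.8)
The plan is to push the line into the conformally extended spacetime $(\hat M,\hat g)$ of Theorem \ref{mainthm2} and exploit the no-observer-horizon mechanism there. Identify $M$ with its image, an open subset of $\hat M$ on which $\hat X$ restricts to $X$, and let $\gamma$ be a future-directed null geodesic line in $(M,g)$, i.e.\ an inextendible-in-$M$, achronal null geodesic; regarded in $(\hat M,\hat g)$ it is a future-directed null (pre)geodesic. The first point I would record is that $\gamma$ is still achronal as a curve of $(\hat M,\hat g)$: if $\gamma(s)\ll_{\hat M}\gamma(s')$, a connecting timelike curve of $\hat M$ would have endpoints in $M$ and, by causal convexity (Lemma \ref{lemma1}), remain in $M$, contradicting achronality of $\gamma$ in $(M,g)$. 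The decisive external input is that $(\hat M,\hat g)$ satisfies NOH (\ref{NOH}): this is Theorem \ref{mainthm1} applied to $\hat M$, since $\hat X$ is a complete timelike Killing --- hence conformal Killing --- field on the globally hyperbolic spacetime $(\hat M,\hat g)$ with compact Cauchy hypersurface.

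To locate the future endpoint I would argue by dichotomy: either $\gamma$ acquires a future endpoint $p^+\in\hat M$, or $\gamma$ is future-inextendible in $(\hat M,\hat g)$. The second alternative is impossible. Its future ray would be a future-inextendible, achronal causal curve, so $I^-(\gamma,\hat M)$ misses the ray (by achronality) and is therefore a \emph{proper} past set; being $I^-$ of a future-inextendible causal curve it is a TIP, whence by \cite{GKP} there is a future-inextendible timelike curve $\beta$ with $I^-(\beta)=I^-(\gamma,\hat M)\neq\hat M$, contradicting NOH. (This is precisely the Introduction's no-null-line argument, run for a ray rather than a full line.) Hence $p^+$ exists; and since $\gamma$ is future-inextendible in $M$ we have $p^+\notin M$, so $p^+\in\partial M$.

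I would then upgrade this to $p^+\in\partial_+M$ using a compact spacelike Cauchy hypersurface $S\subset M$. The inextendible curve $\gamma$ meets $S$ exactly once, so its future portion lies in $I^+(S,M)\subset I^+(S,\hat M)$; picking $\gamma(s_1)$ there, a witness $x\in S$ with $x\ll_{\hat M}\gamma(s_1)$, and using $\gamma(s_1)\leq_{\hat M}p^+$, yields $x\ll_{\hat M}p^+$, i.e.\ $p^+\in I^+(S,\hat M)\cap\partial M=\partial_+M$. The time-dual argument (using the $I^+$/terminal-future version of NOH) produces the past endpoint $p^-\in\partial_-M$.

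The concluding clause then follows from the structural results already in hand. Non-emptiness of both $\partial_+M$ and $\partial_-M$ is immediate, so Proposition \ref{lemma3} gives that $\overline M$ is compact in $\hat M$. For the integral curves, recall $\hat X$ is complete and $\partial_+M,\partial_-M$ are Cauchy hypersurfaces of $(\hat M,\hat g)$ (Proposition \ref{lemma3}); by Proposition \ref{jsmain1} every orbit of $\hat X$ meets each of them exactly once, while (\ref{equality}) identifies $M$ with $I^-(\partial_+M,\hat M)\cap I^+(\partial_-M,\hat M)$. Hence each orbit enters $M$ across $\partial_-M$ and leaves across $\partial_+M$ at finite flow parameter, so its $M$-portion --- a maximal integral curve of $X$ --- is defined on a bounded parameter interval and is incomplete both to the past and to the future. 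I expect the delicate step to be this endpoint dichotomy: one must ensure $I^-(\gamma,\hat M)$ is a \emph{proper} TIP, which is exactly where achronality in $\hat M$ (via Lemma \ref{lemma1}) is indispensable, and check that $p^+$ falls in the \emph{open} set $I^+(S,\hat M)$ rather than merely its closure.
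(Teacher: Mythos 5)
Your proposal is correct and takes essentially the same route as the paper's own proof: pass to the conformal extension $(\hat{M},\hat{g})$, apply Theorem \ref{mainthm1} to get the NOH condition there (the paper uses it to force $I^-(\eta_0)=\hat{M}$ and then contradicts achronality via Lemma \ref{lemma1}, while you run the same TIP/achronality ingredients in contrapositive order), then locate the endpoint on $\partial_+M$ via a spacelike Cauchy hypersurface $S$, and conclude with Proposition \ref{lemma3}. If anything, your witness argument $x\ll_{(\hat{M},\hat{g})}\gamma(s_1)\leq_{(\hat{M},\hat{g})}p^+$ is slightly more careful than the paper's ``enters $I^+(S,\hat{M})$ and remains there,'' which a priori only places the endpoint in the closure of that open set.
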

\begin{proof} Let $\eta: (a,b) \rightarrow M$ be a future-directed null geodesic line ($-\infty \leq a <b \leq +\infty$), and pick any $t_0 \in (a,b)$. Suppose that $\eta_0:=\eta|_{[t_0, b)}$ is future-inextendible in $(\hat{M}, \hat{g})$. Since $\hat{X}$ is a complete timelike Killing vector field, we may use \ref{mainthm1} to conclude that the NOH holds in $(\hat{M}, \hat{g})$. Therefore $I^-(\eta_0) \equiv \hat{M}$. But then there exists $s_0 \in (t_0, b)$ for which $\eta(t_0) \ll_{(\hat{M}, \hat{g})} \eta(s_0)$, and in view of Lemma \ref{lemma1} this implies that $\eta(t_0) \ll_{(M, g)} \eta(s_0)$, contradicting the maximality of the null segment $\eta_{[t_0,s_0]}$.

We therefore conclude that $\eta_0$, and hence $\eta$, cannot be future-inextendible in $(\hat{M}, \hat{g})$, and thus must have an endpoint therein, which in addition is clearly on $\partial M$, say $p \in \partial M$. Given an smooth spacelike Cauchy hypersurface $S \subset M$, since $S$ is acausal in $(\hat{M}, \hat{g})$, $\eta_0$ must enter in $I^{+} (S,\hat{M})$ and remain there, so $p \in I^{+} (S,\hat{M})\cap \hat{M} \equiv \partial _+M$ as desired. An analogous, time-dual argument establishes that $\eta$ has a past endpoint on $\partial _{-} M$.

To complete the proof, note that Proposition \ref{lemma3} shows that $\partial _{+} M$ and $\partial _{-} M$ are Cauchy hypersurfaces in $(\hat{M}, \hat{g})$, and hence every integral curve of $\hat{X}$ will intersect both exactly once at finite value of its parameters, which yields the incompleteness of each integral curve of $X$.
\end{proof}

\begin{cor}
\label{bartnikpartial1}
Assume that the following holds for the globally hyperbolic spacetime $(M,g)$:
\begin{itemize}
\item[i)] the TCC is satisfied and $(M,g)$ has compact Cauchy hypersurfaces (so that $(M,g)$ is cosmological);
\item[ii)] $(M,g)$ is timelike geodesically complete.
\end{itemize}
Then, either $(M,g)$ splits as in the Bartnik conjecture, or else
\begin{enumerate}
\item any future-directed integral curve of $X$ has a future endpoint on $\partial _{+} M$ and a past endpoint on $\partial _{-} M$. In particular, $\partial_+M$ and $\partial_- M$ are both non-empty, and
\item $\overline{M}$ is compact in $\hat{M}$, and {\em every} integral curve of $X$ is incomplete both to the past and to the future.
\end{enumerate}
\end{cor}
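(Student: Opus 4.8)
The plan is to prove the stated dichotomy by contraposition: I would assume that $(M,g)$ does \emph{not} split and then derive conclusions (1) and (2). The engine of the argument is the classical existence result (see, e.g., Ch.~8 of \cite{BE}) that a globally hyperbolic spacetime with compact Cauchy hypersurfaces always contains a causal geodesic line, which is either timelike or null. This furnishes a line which the rest of the argument will analyze.

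Next I would rule out the timelike case. If $(M,g)$ admitted a timelike geodesic line, then---since by hypothesis $(M,g)$ is a timelike geodesically complete cosmological spacetime, so the TCC holds---the Lorentzian Splitting Theorem \cite{E,EG,BE} would force $(M,g)$ to split, contrary to our standing assumption. Hence the line provided by the existence result must be a \emph{null} geodesic line $\eta$ in $(M,g)$. With such a null line in hand, the geometric work has already been carried out in Proposition \ref{line1}, which gives at once that $\eta$ has a future endpoint on $\partial_+ M$ and a past endpoint on $\partial_- M$, that both $\partial_+ M$ and $\partial_- M$ are therefore non-empty, that $\overline{M}$ is compact in $\hat M$, and that every integral curve of $X$ is incomplete both to the past and to the future. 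This immediately yields conclusion (2) together with the non-emptiness asserted in (1).

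It then remains to upgrade the endpoint statement from the single null line $\eta$ to \emph{every} future-directed integral curve of $X$, which is conclusion (1). Here I would invoke Proposition \ref{lemma3}: once $\partial_\pm M$ are non-empty they are Cauchy hypersurfaces in $(\hat M,\hat g)$, and $M = I^-(\partial_+ M,\hat M)\cap I^+(\partial_- M,\hat M)$. Since $\hat X$ is a complete timelike Killing field, each of its inextendible integral curves meets the two Cauchy hypersurfaces $\partial_+ M$ and $\partial_- M$ exactly once, at finite parameter values, and the arc lying in $M$ is precisely the segment between these two crossings. Restricting $\hat X$ to $M$ then exhibits every future-directed integral curve of $X$ as such an arc, with future endpoint on $\partial_+ M$ and past endpoint on $\partial_- M$, completing (1).

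The main obstacle I anticipate is organizational rather than technical: the correct move is to phrase the dichotomy contrapositively, so that the failure of splitting \emph{forces} the available line to be null rather than timelike---only then does Proposition \ref{line1} become applicable. No new curvature estimates or limit arguments are needed, since the substantive analysis of how causal lines must terminate on the conformal boundary $\partial M$ has already been absorbed into Propositions \ref{lemma3} and \ref{line1}; the corollary is essentially their synthesis with the standard line-existence result and the Lorentzian Splitting Theorem.
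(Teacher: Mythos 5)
Your proposal is correct and takes essentially the same route as the paper: the paper's proof likewise starts from the causal geodesic line guaranteed in a spacetime with compact Cauchy hypersurfaces, observes that a timelike line would force splitting via the Lorentzian Splitting Theorem under hypotheses (i)--(ii), and then feeds the resulting null line into Proposition \ref{line1}. Your additional step deriving the endpoint statement in conclusion (1) for integral curves of $X$ from Proposition \ref{lemma3} (each integral curve of $\hat{X}$ crossing the Cauchy hypersurfaces $\partial_{\pm}M$ exactly once, with the arc in $M$ being the segment between the crossings) is a sound filling-in of a detail the paper leaves implicit, since it appears in the proof, but not the statement, of Proposition \ref{line1}.
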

\begin{proof} Indeed, we know that $(M,g)$ admits either a null or a timelike line. But under assumptions $(i)-(ii)$, if the latter occurs then $(M,g)$ splits by the Lorentzian splitting theorem and we are done. We can therefore assume that $(M,g)$ admits a null line, in which case the conclusions follow from Proposition \ref{line1}.
\end{proof}

\begin{cor}
\label{bartnikpartial2}
Assume that the conditions $(i)-(ii)$ of Corollary \ref{bartnikpartial1} hold for the globally hyperbolic spacetime $(M,g)$. If $(M,g)$ does not split, then the norm $\beta:= \sqrt{- g(X,X)}$ of the conformal Killing vector field $X$ is unbounded along every future- or past-inextendible timelike geodesic.
\end{cor}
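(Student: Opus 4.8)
The plan is to argue by contradiction, exploiting the single key fact that the conformal factor relating $g$ to the extended metric $\hat{g}$ of Theorem \ref{mainthm2} is precisely $1/\beta^2$, so that a bound on $\beta$ forces a timelike curve of infinite $\hat{g}$-proper time, which is impossible inside the compact set $\overline{M}$. First I would reduce to the future case: suppose, contrary to the claim, that $\beta \leq B$ along some future-inextendible timelike geodesic $\gamma$ of $(M,g)$, the past-inextendible case being entirely time-dual. Since $(M,g)$ does not split, Corollary \ref{bartnikpartial1} guarantees that $\partial_+ M$ and $\partial_- M$ are both non-empty and that $\overline{M}$ is compact in $\hat{M}$. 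Because $(M,g)$ is timelike geodesically complete, I may parametrize $\gamma$ by $g$-proper time on $[0,+\infty)$, so that $g(\dot{\gamma},\dot{\gamma}) \equiv -1$.

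Next I would pass to $(\hat{M},\hat{g})$, recalling from (\ref{starmetric}) that on $\varphi(M)\cong M$ one has $\hat{g}=(1/\beta^2)g$. Hence along $\gamma$ the $\hat{g}$-speed is $\sqrt{-\hat{g}(\dot{\gamma},\dot{\gamma})}=1/\beta$, and the total $\hat{g}$-proper time is $\int_0^{+\infty}(1/\beta)\,d\tau \geq (1/B)\int_0^{+\infty}d\tau = +\infty$. Thus $\varphi\circ\gamma$ is a future-directed timelike curve of $(\hat{M},\hat{g})$ which stays inside the compact set $\overline{M}$ yet has infinite $\hat{g}$-proper time. Reparametrize it by $\hat{g}$-proper time $\hat{\tau}\in[0,+\infty)$, and denote by $v$ its (unit) $\hat{g}$-tangent.

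To derive the contradiction I would use a temporal function. Take $T:=\pi_1$, the standard time coordinate on the standard stationary spacetime $\hat{M}$, which is temporal by item 2 of Example \ref{ex1}, so that $\nabla T$ is timelike; by compactness $c:=\min_{\overline{M}}\bigl(-\hat{g}(\nabla T,\nabla T)\bigr)>0$, and $T$ is bounded on $\overline{M}$. Since $v$ and $\nabla T$ are both timelike and $\varphi\circ\gamma$ is future-directed, the reverse Cauchy--Schwarz inequality gives $dT/d\hat{\tau}=\hat{g}(\nabla T,v)\geq \sqrt{-\hat{g}(\nabla T,\nabla T)}\geq \sqrt{c}>0$, whence $T\to +\infty$ along the curve. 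This contradicts the boundedness of $T$ on $\overline{M}$, and the past-inextendible case follows dually (with $T\to-\infty$), completing the argument. I expect the only delicate point to be this last step: one must secure a \emph{uniform}, strictly positive lower bound on the rate of increase of a temporal function along unit-speed timelike curves, and it is exactly the compactness of $\overline{M}$ (from the non-splitting hypothesis via Corollary \ref{bartnikpartial1}) that furnishes such a bound through the reverse Cauchy--Schwarz inequality. Everything else is routine bookkeeping of the conformal factor $1/\beta^2$.
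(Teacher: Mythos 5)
Your proposal is correct and follows essentially the same route as the paper's proof: both use Corollary \ref{bartnikpartial1} to get compactness of $\overline{M}$ in $\hat{M}$, a temporal function bounded on $\overline{M}$ with $\hat{g}$-gradient norm uniformly bounded below there, and the reverse Cauchy--Schwarz inequality to show that a bound on $\beta$ (via the conformal factor $1/\beta^2$ and future-completeness of $\gamma$) forces the temporal function to diverge along the curve, a contradiction. The only cosmetic differences are that you reparametrize by $\hat{g}$-proper time and fix the concrete temporal function $T=\pi_1$, whereas the paper integrates in the original $g$-proper-time parameter against an arbitrary temporal function $f$.
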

\begin{proof} Since $(\hat{M},\hat{g})$ is in particular stably causal, we may choose a temporal function, i.e., a smooth function $f: \hat{M} \rightarrow \mathbb{R}$ with past-directed timelike gradient $\hat{\nabla } f$. For the sake of clarity, we adopt for the rest of this proof the following notation. Given $v \in TM$ its norm with respect to $g$ and $\hat{g}$ will be denoted by $|v|$ and $\| v\|$, respectively. In particular, if $v \in T_pM$, then
\begin{equation}
\label{eq0aux}
\| v \| \equiv |v|/\beta(p).
\end{equation}

As long as we are assuming here that $(M,g)$ does not split, Corollary \ref{bartnikpartial1} implies in particular that $\overline{M}$ is compact in $\hat{M}$, so $f$ is bounded in $M$, and we can also pick a constant $c>0$ for which
\begin{equation}
\label{eq1aux}
\| \hat{\nabla } f \| \geq c \mbox{ in $M$}.
\end{equation}

Consider a future-inextendible (and thus future-complete) timelike $g$-geodesic $\gamma: [0,+\infty) \rightarrow M$. If $\beta \circ \gamma $ were bounded, then
\begin{equation}
\label{eq2aux}
\frac{1}{\beta \circ \gamma } \geq A \mbox{ for some positive number $A$}.
\end{equation}
But then, we have, for each $t \in [0,+\infty)$,
\begin{eqnarray}
f(\gamma(t)) - f(\gamma(0)) &=&\int_0^t (f \circ \gamma)'(s) ds = \int_0^t \hat{g}(\hat{\nabla } f (\gamma(s)), \gamma '(s)) ds \\ &\geq &  \int_0^t \| \hat{\nabla } f (\gamma(s))\|\| \gamma'(s)\| ds \geq cA  \int_0^t | \gamma'(s)| ds,
\end{eqnarray}
where we have used the reverse Cauchy inequality for timelike vectors for the first inequality on the right-hand side and the bounds in (\ref{eq1aux}) and (\ref{eq2aux}), as well as the relationship in (\ref{eq0aux}), for the second inequality. However, since the last term on the last equation diverges as $t$ goes to infinity since $\gamma$ is future-complete, we would conclude that $f$ is unbounded in $M$, a contradiction.
\end{proof}

We finally have

{\em Proof of Theorem \ref{mainthm4}.}
Assume, to the contrary, that $(M,g)$ does not split. Then, due to Corollary \ref{maincor1}, we may assume that $X$ is incomplete. \cambios{Thus, under our assumptions, either alternative $(iii.2)$ or alternative $(iii.3)$ in the statement of Theorem \ref{mainthm4} hold.}

Let $\gamma :[0,+\infty) \rightarrow M$ be a future-complete unit timelike $g$-geodesic. Then its $\hat{g}$-arc length reparametrization will be denoted by $\hat{\gamma}:[0,\hat{\ell}) \rightarrow M$.

 The first key point to note is that, due to Corollary \ref{bartnikpartial1}, $\overline{M}$ is compact in $\hat{M}$, and hence $\hat{\gamma}$ cannot be future-inextendible in $(\hat{M},\hat{g})$; consequently $\hat{\ell} < \infty $.

\cambios{Now, if $(iii.3)$ holds, then $\sigma \circ \hat{\gamma}$ is bounded. However, since  $\sigma \equiv X(\log \beta)$ (cf. Eq. (\ref{conformalfunction})), this would imply that so is $\beta \circ \hat{\gamma}$, which contradicts Corollary \ref{bartnikpartial2}.

Therefore, we shall assume, for the rest of the proof, that alternative $(iii.2)$ holds.}

The next ingredient is the well-known formula relating the Ricci tensor of conformally rescaled metrics (see, e.g., p. 59 of \cite{besse})
\begin{eqnarray}
\label{rescaling1}
Ric &=& \hat{Ric} + (1-n) \left[ \hat{Hess} _{\log \beta} - d \log \beta \otimes d \log \beta \right] \nonumber \\
&-& \left[ \hat{\triangle }\log \beta + (n-1) \hat{g}(\hat{\nabla }\log \beta, \hat{\nabla }\log \beta \right] \hat{g}.
\end{eqnarray}
After a straightforward calculation with formula (\ref{rescaling1}), we get, on the one hand,
\begin{equation}
\label{rescaling2}
Ric(\hat{\gamma}',\hat{\gamma}') = \hat{Ric}(\hat{\gamma}',\hat{\gamma}') - (n-1)(\log \beta \circ \hat{\gamma})'' + (\hat{\triangle }\log \beta)\circ \hat{\gamma},
\end{equation}
and on the other hand, tracing (\ref{rescaling1}) we get, after a few rearrangements,
\begin{equation}
\label{rescaling3}
\hat{\triangle }\log \beta = \frac{\hat{R} - \beta ^2 R}{2n} - \frac{(n-1)}{2} \hat{g}(\hat{\nabla }\log \beta, \hat{\nabla }\log \beta) .
\end{equation}

An easy computation also reveals that
\begin{equation}
\label{gradientbeta}
\hat{\nabla} \log \beta = \beta^2 \nabla \log \beta = - \beta\sigma U + \beta^2 \nabla _U U,
\end{equation}
where $U:= X/\beta $. Gathering together Eqs. (\ref{rescaling2}), (\ref{rescaling3}) and (\ref{gradientbeta}) we end up with
\begin{eqnarray}
\label{rescaling4}
Ric(\hat{\gamma}',\hat{\gamma}') &=& \hat{Ric}(\hat{\gamma}',\hat{\gamma}') - (n-1)(\log \beta \circ \hat{\gamma})''  + \frac{\hat{R}\circ \hat{\gamma} }{2n} %\nonumber \\
 \\ &-&\frac{(n-1)}{2} (\beta \circ \hat{\gamma}) ^2 g(\nabla _U U,\nabla _U U)\circ \hat{\gamma}+ \frac{(n-1)}{2}\left[\sigma ^2 - \frac{\beta ^2 R}{n(n-1)}\right] \circ \hat{\gamma}.\nonumber
\end{eqnarray}
Finally, about Eq. (\ref{rescaling4}), we note the following points.
\begin{itemize}
\item[1)]
 $\hat{R}\circ \hat{\gamma}$ is bounded, as per the first key point above;
\item[2)] $g(\nabla _U U,\nabla _U U) \geq 0$, since $U$ is a $g$-unit timelike field, and hence $g(\nabla _U U,U)\equiv 0$;
\item[3)] $(\log \beta \circ \hat{\gamma})''$ is not bounded above. Indeed, $log\beta \circ \hat{\gamma}$ and $(log\beta \circ \hat{\gamma})'$  are defined over a finite interval (recall that $\hat{\ell}<\infty$), and hence it is unbounded due to Corollary \ref{bartnikpartial2}.
\end{itemize}
We now argue that the first term on the right-hand side of Eq. (\ref{rescaling2}), namely  $\hat{Ric}(\hat{\gamma}',\hat{\gamma}')$, is also bounded in $M$. The upper bound (\ref{bound}) on the last term of Eq. (\ref{rescaling4}) and the TCC condition taken together imply that the following inequality holds:
\begin{equation}\label{hhh}
(n-1)(\log\beta\circ\hat{\gamma})''\leq\hat{Ric}(\hat{\gamma}',\hat{\gamma}')+D\quad\hbox{for some $D>0$.}
\end{equation}
On the other hand, given a Cauchy surface $S$, the spacetime $(\hat{M},\hat{g})$ can be seen as the standard stationary spacetime (cf. Example \ref{ex1}) associated with the data $(S,g_0,\beta_0 \equiv 1, \omega_0)$ (recall the proof of Theorem \ref{mainthm2}). So, taking into account that $\hat{\gamma}\equiv (\hat{t},\hat{x})$ is timelike, we deduce the existence of some $\mu>0$ such that
\[
\hat{t}'^2\geq \mu\; g_0(\hat{x}',\hat{x}')=\mu\,\|\hat{x}'\|_0^2.
\]
So, if we define the Riemannian metric $g_R$ on $\hat{M}$ as follows,
\[
g_R:=dt^2+g_0,
\]
then
\begin{equation}\label{h}
\|\hat{\gamma}'\|_R^2=\hat{t}'^2+\|\hat{x}'\|_0^2\leq \left(\frac{1}{\mu}+1\right)\hat{t}'^2.
\end{equation}
On the other hand, the compactness of $\overline{M}$ in $\hat{M}$ gives
\begin{equation}\label{hh}
\hat{Ric}(\hat{\gamma}',\hat{\gamma}')\leq C\,\|\hat{\gamma}'\|_R^2\quad\hbox{for some $C>0$.}
\end{equation}
Thus, taking into account (\ref{h}) and (\ref{hh}) in (\ref{hhh}), we deduce
\begin{equation}\label{g}
(n-1)(\log\beta\circ\hat{\gamma})''\leq
C\,\left(\frac{1}{\mu}+1\right)\hat{t}'^2+D.
\end{equation}
Next, we integrate both sides of (\ref{g}) along $[0,\hat{s}]$, for any $0\leq\hat{s}<\hat{l}$,
\begin{equation}\label{z}
(\log\beta\circ\hat{\gamma})'\mid_0^{\hat{s}}\leq C'\int_0^{\hat{s}}\hat{t}'^2 + D'.
\end{equation}
If the integral in the previous inequality remains bounded for all $\hat{s}\in [0,\hat{l})$, then we have a contradiction with point 3) above. Hence, we can assume that it is not bounded above, and so, since $\hat{\gamma}$ is future-directed, $\hat{t}'$ is (positive and) unbounded from above. On the other hand, note that
\begin{equation}\label{zz}
\int_{0}^{\hat{s}}\hat{t}'^2=(\hat{t}(\hat{s})\hat{t}'(\hat{s})-\hat{t}(0)\hat{t}'(0))-\int_0^{\hat{s}}\hat{t}\hat{t}''.
\end{equation}
% \sout{On the other hand, since $\hat{\gamma}$ is future-directed, the unboundedness of the integral in (\ref{z}) means in particular that $\hat{t}'$ is (positive and) unbounded from above.}
We need to show that 
\begin{equation}\label{zzz}
\int_{0}^{\hat{l}}\hat{t}\hat{t}''>0.
\end{equation}
As $\hat{t}$ is continuous, it is bounded on $M$, and it is strictly increasing, so there is no loss of generality in assuming it is positive on $[0,\hat{l})$. Let 
\[
J_{+}:= \{ s \in (0,\hat{l}) \, : \, \hat{t}''(s) >0\}
\]
and
\[
J_{-}:= \{ s \in (0,\hat{l}) \, : \, \hat{t}''(s) <0\}. 
\]
Proving (\ref{zzz}) is equivalent to showing that
\begin{equation}
\label{compare}
\int _{J_{+}}\hat{t}(s)|\hat{t}''(s)| ds > \int _{J_{-}}\hat{t}(s)|\hat{t}''(s)| ds . 
\end{equation}
But because $\hat{t}'$ diverges to infinity on $[0,\hat{l})$, it is not difficult to see that for any interval $I\subset [0,\hat{l})$ with $\hat{t}''<0$ on $I$ and sup$(I)\leq \hat{t}_0$ for some $\hat{t}_0\in [0,\hat{l})$, there exists some subset $J_I\subset [0,\hat{l})$ with $\hat{t}_0\leq$inf$(J_I)<$sup$(J_I)<\hat{l}$ such that
$\int_{J_I}\hat{t}''>-\int_I\hat{t}''$; for example, take $\hat{t}_0<\hat{s}_1<\hat{s}_2<\hat{l}$ such that $\hat{t}'(s_2)-\hat{t}'(s_1)>-\Delta\hat{t}'\mid_I$. Define then $J_I:=\{\hat{s}\in[\hat{s}_1,\hat{s}_2]:\hat{t}''(\hat{s})>0\}$. Since inf$(J_I)>$sup$(I)$,
$\int_{J_I}\hat{t}\hat{t}''>\int_{J_I}{\rm inf}(J_I)\hat{t}''>-\int_I{\rm sup}(I)\hat{t}''>-\int_I\hat{t}\hat{t}''$. Hence (\ref{compare}), and hence (\ref{zzz}), holds. 

Next, we replace (\ref{zz}) into (\ref{z}), and take into account (\ref{zzz}) and the boundedness of $\hat{t}$ on $[0,\hat{l})$, and deduce
\begin{equation}\label{n}
(\log\beta\circ\hat{\gamma})'(\hat{s})\leq C''\hat{t}'(\hat{s})
%-C'\int_{0}^{\hat{s}}\hat{t}\hat{t}''
+D''\quad\hbox{for $\hat{s}$ near $\hat{l}$.}
\end{equation}
Thus, if we integrate both sides of (\ref{n}), and use again that $\hat{t}(\hat{s})$ remains bounded on $[0,\hat{l})$, we arrive at a contradiction with point (3) above. \qcd

\section*{Acknowledgments}

The authors are partially supported by the Spanish Grant MTM2016-78807-C2-2-P (MINECO and FEDER funds). The second author also wishes to acknowledge the Department of Mathematics, Universidade Federal de Santa Catarina (Brazil), for the kind hospitality while part of this research was being carried out.

\bibliographystyle{unsrt}

\begin{thebibliography}{1}

\bibitem{B} R. Bartnik, {\it Remarks on cosmological spacetimes and constant mean curvature
surfaces}, Comm. Math. Phys. {\bf 117} (1988), no. 4, 615–624.
%
\bibitem{BE}
J.K. Beem, P.E. Ehrlich and K.L. Easley
  {\it Global Lorentzian Geometry}, $2^{\rm{nd}}$ ed.,
  Marcel Dekker, New York (1996).
%
%\bibitem{DE}
%P. Brax, {\em What makes the Universe accelerate? A review on what dark energy could be and how to test it}, Rep. Prog. Phys. {\bf 81}, (2018) 016902.
%
%
\bibitem{besse}
A.L. Besse, {\em Einstein manifolds}, Springer-Verlag, Berlin Heidelberg (1987).
%
\bibitem{CFS} A.M. Candela, J.L. Flores, M. S\'anchez, {\it Global hyperbolicity and Palais–Smale condition
for action functionals in stationary spacetimes}, Adv. Math. {\bf 218} (2008), 515–536.
%
\bibitem{splittingus}
I.P. Costa e Silva and J.L. Flores, {\em On the splitting problem for Lorentzian
manifolds with an $\mathbb{R}$-action with causal orbits}, Ann. Henri Poincar\'e {\bf 18} (2017), no. 5, 1635–1670.
%
\bibitem{E}
J.-H. Eschenburg, {\em The splitting theorem for space-times with strong energy con\-di\-tion}, J. Differential Geom. 27 (1988), no. 3, 477–491.
%
\bibitem{EG}
J.-H. Eschenburg and G. J. Galloway, {\em Lines in space-times}, Comm. Math. Phys.
{\bf 148} (1992), no. 1, 209–216.
%
\bibitem{G1}
G. J. Galloway, {\em Splitting theorems for spatially closed space-times}, Comm. Math.
Phys. {\bf 96} (1984), no. 4, (1984) 423–429.
%
\bibitem{G2}
G. J. Galloway, {\em Some rigidity results for spatially closed spacetimes}, Mathematics of
gravitation, Part I (Warsaw, 1996), Banach Center Publ., vol. 41, Polish Acad.
Sci., Warsaw, (1997) pp. 21–34.
%
\bibitem{GV1} G.J. Galloway, C. Vega, {\it Hausdorff closed limits and rigidity in Lorentzian geometry}, Ann. Henri
Poincar\'e {\bf 18} (2017), no. 10, 3399–3426.
%
\bibitem{GV2} G.J. Galloway, C. Vega, {\it Rigidity in vacuum under conformal symmetry}, Letters in Math. Phys. {\bf 108} (2018) 10, 2285-2292.
%
\bibitem{GKP}
R.P. Geroch, E. H. Kronheimer. and R. Penrose, {\em Ideal points in spacetime},
Proc. Royal Soc. Lond. {\bf A327}, (1972) 545-567.
%
\bibitem{GH}
D. Garfinkle and S.G. Harris, {\it Ricci fall-off in static and stationary, globally
hyperbolic, non-singular spacetimes}, Class. Quantum Grav. {\bf 14} (1997), no. 1,
139–151.
%
\bibitem{HL}
S.G. Harris and R.J. Low, {\em Causal monotonicity, omniscient foliations and the shape of space}, Class. Quantum Grav. {\bf 18} (2001) 27-43.
%
\bibitem{HE}
S.W. Hawking and G.F.R. Ellis,
  {\it The Large Scale Structure of Space-time},
  Cambridge University Press, Cambridge (1973).
%
\bibitem{HP}
S.W. Hawking and R. Penrose, {\em The singularities of gravitational
collapse and cosmology}, Proc. Royal Soc. Lond. {\bf A314}, (1970) 529-548.
%
\bibitem{JS}
M.A. Javaloyes and M. S\'{a}nchez, {\em A note on the existence of standard splittings for conformally stationary spacetimes}, Class. Quantum Grav. {\bf 25} (2008) 168001.
%
\bibitem{KN}
S. Kobayashi and K. Nomizu, {\em Foundations of Differential Geometry}, Vol. 1, Wiley, New York (1962).
%
\bibitem{Lee}
J.M. Lee, {\em Introduction to smooth manifolds}, Springer, New York (2003).
%
\bibitem{oneill}
B. O'Neill,
  {\it Semi-Riemannian Geometry with Applications to Relativity},
  Academic Press, New York (1983).
%
%\bibitem{perlmutter}
%S.J. Perlmutter {\em et al.}, {\em Measurements of $\Omega$ and $\Lambda$ from $42$ high-redshift supernovae}, Astrophys. J. {\bf 517}, (1999) 565-586.
%
%\bibitem{riess}
%A.G. Riess {\em et al.}, {\em Observational evidence from supernovae for an accelerating universe and a cosmological constant}, Astron. J., {\bf 116}, (1998) 1009-1038.
%
\bibitem{RS}
A. Romero and M. S\'anchez, {\em On completeness of certain families of semi-Riemannian manifolds}, Geom. Dedicata, {\bf 53} (1994), no. 1, 103-117.
%
%\bibitem{flat}
%D.N. Spergel {\em et al.}, {\em Three-year Wilkinson Microwave Anisotropy Probe (WMAP) observations: implications for cosmology}, Astrophys. J. Suppl., {\bf 170}, (2007) 377-408.
%
%\bibitem{DM}
%T.J. Sumner, {\em Experimental searches for dark matter}, Living Rev. Relativ. (2002) 5: 4. https://doi.org/10.12942/lrr-2002-4.
%
\bibitem{Vandenban}
E.P. van den Ban, {\em Lecture notes on Lie groups}, available at http://www.staff.science.uu.nl/~ban00101/lecnotes/lie2010.pdf.
%
\bibitem{wald} R.M. Wald, {\em General Relativity}, University of Chicago Press, Chicago (1984).
%
\bibitem{yau}
S.-T. Yau, {\em Problem section}, Annals of Math. Studies, No. 102, Princeton University
Press, Princeton, N. J., (1982) pp. 669–706.

\end{thebibliography}

\end{document}